\documentclass[12pt, reqno]{amsart}

%-------Packages---------
\usepackage{enumitem}
\usepackage{mathrsfs}
\usepackage{hyperref}
\usepackage{amsthm}
\usepackage{MnSymbol}
\usepackage{bbold}

\usepackage{chngcntr}
\counterwithin*{equation}{section}
\counterwithin*{equation}{subsection}
\usepackage{stmaryrd}
\usepackage{fullpage}
\usepackage{parskip}

%\usepackage[nodisplayskipstretch]{setspace}
%\setstretch{1.5}

%==== Language setup ================================================
\usepackage[utf8]{inputenc}%................... Recognizes ê, ë, etc
\usepackage{csquotes}
\usepackage{cooltooltips}
\usepackage[english]{babel}%..................... Language setup

%==== Math setup ====================================================
% \usepackage{amsmath}%.......................... Advanced math (before fonts)
% \usepackage{amssymb}
% not needed because the documentclass is amsart, which automatically inports amsmath
\usepackage{mathtools}

%==== Font setup (default is Computer Modern) =======================
\usepackage[T1]{fontenc}%........................ Type 1 fonts
\usepackage{textcomp}%........................... Additional text character
\usepackage{bm}%................................. Bold math symbols (after fonts)

%==== Ref's, Bib's and Nomencl ======================================
%\usepackage[maxbibnames=9]{biblatex}
%\addbibresource{22062024.bib}
%\usepackage{usnomencl}%.......................... List of symbols (in usthesis pack)

%==== Graphics and Color ============================================
\usepackage{color}%.............................. Color setup

%****************************************************************************

\newtheorem{theorem}{Theorem}[section]
\newtheorem{corollary}[theorem]{Corollary}
\newtheorem{lemma}[theorem]{Lemma}
\newtheorem{definition}[theorem]{Definition}
\newtheorem{definitionlemma}[theorem]{Definition-Lemma}
\newtheorem{proposition}[theorem]{Proposition}
\newtheorem{remark}[theorem]{Remark}

\let\originalmiddle=\middle
\renewcommand{\middle}[1]{\mathrel{}\originalmiddle#1\mathrel{}}

% Color comments!

\setlength\emergencystretch{3em}
\newcommand\restr[2]{{% we make the whole thing an ordinary symbol
    \left.\kern-\nulldelimiterspace % automatically resize the bar with \right
    #1 % the function
    \vphantom{\big|} % pretend it's a little taller at normal size
    \right|_{#2} % this is the delimiter
}}

\DeclareMathOperator{\lcm}{lcm}
\DeclareMathOperator{\Aut}{Aut}
\DeclareMathOperator{\Id}{Id}
\DeclareMathOperator{\Span}{Span}

\DeclareMathOperator{\centralprod}{\bigcirc}

%****************************************************************************
%====================================================================
%     MAIN DOCUMENT
%====================================================================
\begin{document}

%==== TITLE PAGE ====================================================
\title{On the automorphism group of the monoid of the integers modulo a prime power}
\author{Joseph Atalaye}
\address{
    Department of Mathematical Sciences,
    University of Stellenbosch,
    Stellenbosch,
    South Africa
}
\email{26828146@sun.ac.za}

\author{Liam Baker}
\address{
    Department of Mathematical Sciences,
    University of Stellenbosch,
    Stellenbosch,
    South Africa \\
    \&
    NITheCS (National Institute for Theoretical and Computational Sciences),
    South Africa
}
\email{liambaker@sun.ac.za}
\urladdr{https://math.sun.ac.za/liambaker}

\author{Sophie Marques}
\address{
    Department of Mathematical Sciences,
    University of Stellenbosch,
    Stellenbosch,
    South Africa \\
    \&
    NITheCS (National Institute for Theoretical and Computational Sciences),
    South Africa
}
\email{smarques@sun.ac.za}
\urladdr{https://sites.google.com/site/sophiemarques64/}
\subjclass[2020]{20B25, 20M32, 20E99}
\keywords{Multiplicative automorphism, monoid, $\mathbb{Z}/p^e\mathbb{Z}$, group theory, algebraic structures, semi-direct product, unit group, dihedral group, central product, ring theory}

\begin{abstract}
This paper determines the structure of the automorphism group of the unit group \((U_{p^e}, \cdot)\) and the monoid \((\mathbb{Z}/p^e \mathbb{Z}, \cdot)\).
For \( e \geq 5 \), we establish that the automorphism group \( \Aut(U_{2^e}, \cdot) \) is the direct product of \( \mathbb{Z}/2\mathbb{Z} \) with the central product of a dihedral group of order 8 and the cyclic group \( \mathbb{Z}/2^{e-3}\mathbb{Z} \).
Moreover, we show that the automorphism group \( \Aut(\mathbb{Z}/p^e \mathbb{Z}, \cdot) \) is isomorphic to a canonical semidirect product of \( U_{p^{e-1}} \) and the subgroup of \( \Aut(U_{p^e}, \cdot) \) consisting of automorphisms that induce an automorphism of \( (U_{p^f}, \cdot) \) for any integer \( f \) such that \( 0 \leq f \leq e \).
\end{abstract}

\maketitle

\setcounter{tocdepth}{3}
\tableofcontents

\section*{Acknowledgement}
The authors are grateful to the anonymous referee for their valuable corrections and insightful suggestions, which have greatly improved this paper.

%==== Main document =================================================

\section*{Introduction}

Studying the automorphisms of a structure is undoubtedly relevant in mathematics for several important reasons: they reveal the symmetries of mathematical structures, aid in classifying them, and preserve their intrinsic properties (see, for instance, \cite{zbMATH01650459}, \cite{zbMATH03549232},\cite{zbMATH06968189}).

We consider a prime $p$ and a positive integer $e$.
Computing the automorphism group of \((\mathbb{Z}/p^e \mathbb{Z},+)\) is a fundamental exercise in basic algebra courses.
However, an equally relevant question arises:
\begin{center}
    \textsf{What is the automorphism group of the commutative monoid \((\mathbb{Z}/p^e \mathbb{Z}, \cdot)\)?}
\end{center}
(We refer to Definition \ref{def:monoid} to establish the concept of a monoid.)\\
In this paper, we determine the structure of the automorphism group of this monoid.
In the process, we also clarify the structure of the automorphism group of its group of units, \((U_{p^e}, \cdot)\).
The primary challenge lies in the case where \(p = 2\), as the structure for odd \(p\) is relatively straightforward.

The study of these automorphisms is further motivated by the observation that any multiplicative automorphism \( \phi \) defines a new addition \( +_\phi \) on \( \mathbb{Z}/p^e \mathbb{Z} \), given by
\[
a +_\phi b = \phi^{-1}(\phi(a) + \phi(b))
\]
for all \( a, b \in \mathbb{Z}/p^e \mathbb{Z} \).
This operation gives a novel ring structure: \( (\mathbb{Z}/p^e \mathbb{Z}, +_\phi, \cdot) \).

Furthermore, the automorphism \( \phi \) induces an action \( \cdot_\phi \) of the ring \( (\mathbb{Z}/p^e \mathbb{Z}, +_\phi, \cdot) \) on \( (\mathbb{Z}/p^e \mathbb{Z}, +) \), defined by
\[
a \cdot_\phi b = \phi(a) \cdot b.
\]
This induced action provides a useful framework for analyzing the structural properties of \( \mathbb{Z}/p^e \mathbb{Z} \).
In particular, we can interpret \( (\mathbb{Z}/p^e \mathbb{Z}, +, \cdot) \) as an algebra over the ring \( (\mathbb{Z}/p^e \mathbb{Z}, +_\phi, \cdot) \), via the isomorphism
\[
\phi: (\mathbb{Z}/p^e \mathbb{Z}, +_\phi, \cdot) \to (\mathbb{Z}/p^e \mathbb{Z}, +, \cdot).
\]

The main theorem of this paper, Theorem \ref{Theo3.3}, establishes that \(\Aut \left( \mathbb{Z}/p^e \mathbb{Z}, \cdot \right)\) is isomorphic to a canonical semidirect product of \(U_{p^{e-1}}\) and the subgroup of \( \Aut(U_{p^e}, \cdot) \) consisting of automorphisms that induce an automorphism of \( (U_{p^f}, \cdot) \) for any integer \( f \) such that \( 0 \leq f \leq e \).
Additionally, in Theorem \ref{Theo2.6}, we establish that for \( e \geq 5 \), the automorphism group \( \Aut(U_{2^e}, \cdot) \) is isomorphic to the direct product of \( \mathbb{Z}/2\mathbb{Z} \) and the central product of the dihedral group of order 8 with the cyclic group \( \mathbb{Z}/2^{e-3}\mathbb{Z} \).
Furthermore, in Corollary \ref{strape}, we describe \( \Aut \left( \mathbb{Z}/p^e \mathbb{Z}, \cdot \right) \) in terms of (semi-)direct products involving well-known groups.

The paper is structured as follows:

In the first section, we recall some general properties of monoid morphisms that will be useful for the remaining part of the paper.

The second section delves into the automorphisms of the unit group \( (U_{p^e},\cdot) \), as understanding these is key to characterizing the automorphisms of the monoid \((\mathbb{Z}/p^e\mathbb{Z},\cdot)\), which is addressed in Section 3.
The analysis focuses primarily on the case \( p = 2 \), as the case for odd \( p \) is relatively straightforward.
After establishing some basic results about the group \((U_{p^e} ,\cdot)\), including computing the order of its elements (see Lemma \ref{Lem2.1}), comparing the properties of the group of units for different powers of \(p\) (see Lemma \ref{Lem2.3}), and determining all the minimal cardinality generator systems for \((U_{p^e},\cdot)\) (see Lemma \ref{Lem2.3}), we compute the automorphism group of \((U_{p^e},\cdot)\) (see Proposition \ref{Prop2.3}) and identify its structure as a known group construction (see Theorem \ref{Theo2.6}).

The third section proceeds similarly but with a focus on \((\mathbb{Z}/p^e\mathbb{Z},\cdot)\).
We establish some basic results about the monoid \((\mathbb{Z}/p^e \mathbb{Z}, \cdot)\), including criteria for determining when two elements are equal in \(\mathbb{Z}/p^e \mathbb{Z}\) (see Lemma \ref{Lem3.1}) and identifying all the minimal sets of generators for \((\mathbb{Z}/p^e \mathbb{Z} ,\cdot)\) (see Lemma \ref{lem:ZpeZ_generators}).
From these results, we successfully determine the structure of the automorphism group of \((\mathbb{Z}/p^e \mathbb{Z}, \cdot)\).
Specifically, for \(p^e \neq 8\), we identify the automorphism group as a canonical semidirect product of \(U_{p^{e-1}}\) and the subgroup of \( \Aut(U_{p^e}, \cdot) \) consisting of automorphisms that induce an automorphism of \( (U_{p^f}, \cdot) \) for any integer \( f \) such that \( 0 \leq f \leq e \) (see Theorem \ref{Theo3.3}) and identify its structure as a known group construction (see Corollary \ref{strape}).

\section{Preliminaries}
For this paper, \(\mathbb{N}\) denotes the set of natural numbers, including \(0\), \(p\) denotes a prime number and \(e\) denotes a positive integer.
We adopt the convention that \(0^0 = 1\).
We denote by \([a]_n\) the equivalence class of an integer \(a\) modulo \(n\).
We will simply write \([a]\) for \([a]_{p^e}\).
For integers $a$, $b$, and $n$ we write $a^n \mid\mid b$ if $a^n \mid b$ and $a^{n+1} \nmid b$, and for $a < b$ we use the notation $\llbracket a,b \rrbracket = \{a, a+1, \dotsc, b\}$.

The purpose of this paper is to determine the automorphisms of \((U_{p^e},\cdot)\), the group of integers coprime to \(p\) modulo \(p^e\), and \((\mathbb{Z}/p^e\mathbb{Z},\cdot)\), the multiplicative monoid of integers modulo $p^e$.
We start the section with the definition of a monoid.
\begin{definition} \label{def:monoid}
A system \((G, \cdot, 1)\) is called a \textsf{monoid} if \(\cdot\) is an associative binary operation on \(G\) and \(1\) is a neutral element, meaning that for all \( g \in G \), we have \( g \cdot 1 = 1 \cdot g = g \).
The monoid is said to be \textsf{commutative} if \( gh = hg \) for all \( g, h \in G \).
We may also denote the monoid as \((G, \cdot)\), or simply as \(G\) when the operation \(\cdot\) is clear from the context.
\end{definition}

The following definitions introduce various fundamental concepts related to monoids, which we will use throughout the paper.

\begin{definition}\label{Def1.1}
Let $(G, \cdot, 1)$ be a monoid, and let $S$ be a subset of $G$.
\begin{enumerate}
    \item We denote by $G^\times$ the \textsf{set of invertible elements} in $G$, defined as \[ \left\{ g \in G \middle| \exists h \in G \ (gh = hg = 1) \right\}. \]
    We note that $(G^\times, \cdot, 1)$ forms a group.
    \item We define $\Span(S)$ as the intersection of all submonoids of $(G, \cdot, 1)$ with respect to $\cdot$ that contain the set $S$. $\Span(S)$ is itself a monoid.
    \item We say that $S$ generates $G$ if $G = \Span(S)$.
    \item We denote by $\Aut(G, \cdot)$ the \textsf{automorphism group of the monoid} $(G, \cdot, 1)$.
\end{enumerate}
\end{definition}

The following lemma identifies a few elementary but useful properties of automorphisms of a monoid, and its proof is straightforward.

\begin{lemma}\label{Lem1.2}
Let \((G, \cdot, 1)\) be a monoid and let \(\phi \in \Aut(G, \cdot)\).
Then:
\begin{enumerate}
    \item \(\phi(G^\times) = G^\times\), and the map \(\tilde{\phi}: G^\times \rightarrow G^\times\), \(x \mapsto \phi(x)\) is a group automorphism.
    \item If \(G\) contains an absorbing element \(0\) (that is, \(0 \in G\) and \(0 \cdot x = x \cdot 0 = 0\) for all \(x \in G\)), then such an element is unique and \(\phi(0) = 0\).
\end{enumerate}
\end{lemma}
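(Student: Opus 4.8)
The plan is to prove both parts directly from the definitions, since each amounts to a short structural observation about how a monoid automorphism must interact with distinguished subsets and elements of the monoid.

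\medskip

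For part (1), I would first show that $\phi(G^\times) \subseteq G^\times$. The key observation is that invertibility is a property preserved by any monoid homomorphism that respects the identity: if $g \in G^\times$ with inverse $h$, then applying $\phi$ to the relations $gh = hg = 1$ and using $\phi(1) = 1$ (which holds because $\phi$ is a monoid automorphism, hence fixes the neutral element) yields $\phi(g)\phi(h) = \phi(h)\phi(g) = 1$, so $\phi(g) \in G^\times$ with inverse $\phi(h)$. The same argument applied to $\phi^{-1}$ (also a monoid automorphism) gives $\phi^{-1}(G^\times) \subseteq G^\times$, so $G^\times \subseteq \phi(G^\times)$, establishing equality $\phi(G^\times) = G^\times$. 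Then $\tilde\phi$ is well-defined as a map $G^\times \to G^\times$; it is bijective because $\phi$ is and because $\phi^{-1}$ restricts to the inverse map on $G^\times$ by the same equality; and it is a homomorphism of groups because it is the restriction of a monoid homomorphism to a submonoid, so it respects the operation and the identity.

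\medskip

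For part (2), I would first verify uniqueness of an absorbing element: if $0$ and $0'$ are both absorbing, then $0 \cdot 0' = 0$ (since $0$ is absorbing) and simultaneously $0 \cdot 0' = 0'$ (since $0'$ is absorbing), forcing $0 = 0'$. For the fixed-point claim, I would use that $\phi(0)$ inherits the absorbing property: for any $x \in G$, writing $x = \phi(y)$ for the unique preimage $y$ (as $\phi$ is surjective), we get $\phi(0) \cdot x = \phi(0) \cdot \phi(y) = \phi(0 \cdot y) = \phi(0)$, and symmetrically on the other side, so $\phi(0)$ is an absorbing element; by the uniqueness just established, $\phi(0) = 0$.

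\medskip

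I do not anticipate any genuine obstacle here, which matches the excerpt's description of the proof as straightforward; the only point requiring mild care is making explicit that a monoid automorphism fixes the identity $1$ (so that part (1) goes through) and that surjectivity of $\phi$ is what lets the absorbing property transfer to $\phi(0)$ in part (2). Both facts are immediate from the definition of a monoid automorphism as a bijective monoid homomorphism, so the whole argument reduces to chasing the defining relations through $\phi$ and invoking the respective uniqueness statements.
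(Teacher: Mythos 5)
Your proof is correct, and it supplies exactly the standard argument that the paper omits: the paper states Lemma \ref{Lem1.2} with only the remark that ``its proof is straightforward'' and gives no proof at all. Both halves of your argument (pushing the inverse relations through $\phi$ and through $\phi^{-1}$ for part (1), and transferring the absorbing property via surjectivity plus uniqueness for part (2)) are precisely the intended straightforward verification, so there is nothing to compare against and nothing to correct.
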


\section{Characterizing the automorphism group of the unit group modulo \texorpdfstring{$p^e$}{}}
\subsection{Basic results about the unit group modulo \texorpdfstring{$p^e$}{}}
It is well known (see, for instance, \cite[Theorem 6.10]{zbMATH01115328}) that when \( p \) is odd, \( U_{p^e} \) is a cyclic group of order \( p^{e-1}(p-1) \).
On the other hand, when \( p = 2 \) and \( e \geq 3 \), \( U_{2^e} \) is the direct product of a cyclic group of order \( 2^{e-2} \) (generated by \([5]\)) and a cyclic group of order \( 2 \) (generated by \([-1]\)).
This fundamental result will be used throughout the paper.

We begin this section by recalling the method for computing the order of an element in \( U_{2^e} \).
\begin{lemma}\label{Lem2.1}
Let \(e \geq 2\), \(v \in \{0, 1\}\), and \(w \in \llbracket 0, 2^{e-2}-1 \rrbracket\).
For \(w \neq 0\), let \(\beta\) denote the largest integer such that \(2^{\beta} \mid\mid w\) (i.e., \(2^{\beta}\) divides \(w\) but \(2^{\beta+1}\) does not).
Then, the order of the element \([-1]^v[5]^w\) in \(U_{2^e}\) is given by
\[
    o([-1]^v[5]^w) =
    \begin{cases}
        1 & \text{if } (v,w) = (0,0), \\
        2 & \text{if } (v,w) = (1,0), \\
        2^{e-2-\beta} & \text{if } w \neq 0.
    \end{cases}
\]
\end{lemma}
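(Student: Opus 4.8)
The plan is to compute the order of $[-1]^v[5]^w$ by treating the two cyclic factors of $U_{2^e}$ separately, since for $e \geq 3$ we have the direct product decomposition $U_{2^e} \cong \langle [-1] \rangle \times \langle [5] \rangle$ with $[-1]$ of order $2$ and $[5]$ of order $2^{e-2}$. In a direct product, the order of a pair is the least common multiple of the orders of its components, so the entire problem reduces to computing $o([-1]^v)$ and $o([5]^w)$ individually. The cases $(v,w) = (0,0)$ and $(v,w) = (1,0)$ are immediate: $[-1]^0[5]^0 = [1]$ has order $1$, and $[-1]^1[5]^0 = [-1]$ has order $2$.

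The substantive part is the case $w \neq 0$. First I would establish that $o([5]^w) = 2^{e-2-\beta}$, where $2^\beta \mid\mid w$. Since $[5]$ generates a cyclic group of order $2^{e-2}$, the standard formula for the order of a power in a cyclic group gives
\[
    o([5]^w) = \frac{2^{e-2}}{\gcd(w, 2^{e-2})}.
\]
Writing $w = 2^\beta u$ with $u$ odd and noting $\beta \leq e-2-1 < e-2$ (which holds since $w \in \llbracket 0, 2^{e-2}-1\rrbracket$ forces $\beta < e-2$), we get $\gcd(w, 2^{e-2}) = 2^{\min(\beta, e-2)} = 2^\beta$, whence $o([5]^w) = 2^{e-2-\beta}$. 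Then I would take the least common multiple with $o([-1]^v)$: when $w \neq 0$ we have $\beta < e-2$, so $e - 2 - \beta \geq 1$ and thus $2 \mid 2^{e-2-\beta}$. Therefore $\lcm(o([-1]^v), o([5]^w)) = \lcm(2^v, 2^{e-2-\beta}) = 2^{e-2-\beta}$ regardless of whether $v = 0$ or $v = 1$, since the factor of $2$ coming from $[-1]^v$ is already absorbed. This explains why the formula for $w \neq 0$ does not depend on $v$.

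The main obstacle, though minor, is justifying the boundary behavior of $\beta$: I must confirm that $w \neq 0$ together with $w \leq 2^{e-2}-1$ guarantees $\beta \leq e-3$, so that $e-2-\beta \geq 1$ and the $\lcm$ indeed swallows the contribution of $[-1]^v$. This hinges on the range constraint $w \in \llbracket 0, 2^{e-2}-1\rrbracket$, which ensures $2^{e-2} \nmid w$ and hence $\beta < e-2$. I would also, at the outset, note the hypothesis $e \geq 2$: for $e = 2$ the group $U_4 = \{[1],[3]\}$ is cyclic of order $2$ and the decomposition degenerates (only $w = 0$ is available in $\llbracket 0, 0 \rrbracket$), so the only relevant cases are $(v,w) \in \{(0,0),(1,0)\}$, and the formula still reads correctly. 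With these range observations in place, the computation is a direct application of the $\lcm$ rule for orders in a direct product.
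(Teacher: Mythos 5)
Your proof is correct and follows essentially the same route as the paper's: both use the direct product decomposition of $U_{2^e}$, the $\lcm$ rule for orders of products in an abelian group, and the formula $o([5]^w) = 2^{e-2}/\gcd(w,2^{e-2})$. Your additional care in verifying that $\beta < e-2$ (so the $\lcm$ absorbs the contribution of $[-1]^v$) makes explicit a step the paper leaves implicit, but the argument is the same.
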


\begin{proof}
The formula can be derived from the following identities, which hold in any abelian group such as \( U_{2^e} \):

\begin{itemize}
    \item \( o([-1]^v[5]^w) = \lcm(o([-1]^v), o([5]^w)) \) (see \cite[Lemma 9.5.12]{Carstensen2019});
    \item \( o([5]^w) = \frac{2^{e-2}}{\gcd(w, 2^{e-2})} \) (see \cite[Theorem 0.2.11]{roman2005field});
    \item
    \[
    o([-1]^v) =
    \begin{cases}
        1, & \text{if } v = 0, \\
        2, & \text{if } v = 1.
    \end{cases}
    \]
\end{itemize}

Combining these, the result follows.
\end{proof}

\begin{lemma}\label{Lem5.14}
For $e\geq 5$, we have $[5^{w+2^{e-5}}]_{2^{e-2}} = [5^{w} +2^{e-3} ]_{2^{e-2}}$.
\end{lemma}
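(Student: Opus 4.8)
The plan is to reduce the entire identity to understanding the single element $5^{2^{e-5}}$ modulo $2^{e-2}$. The heart of the matter is the classical refinement that the $2$-adic valuation of $5^{2^k}-1$ equals exactly $k+2$ for every $k \geq 0$; equivalently, that $5^{2^k} \equiv 1 + 2^{k+2} \pmod{2^{k+3}}$. Once this is established, the claimed equation falls out by elementary parity manipulations modulo $2^{e-2}$.

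First I would prove the valuation statement by induction on $k$. The base case $k=0$ reads $5 \equiv 1 + 4 \pmod 8$, which holds. For the inductive step, write $5^{2^k} = 1 + 2^{k+2} m$ with $m$ odd; squaring gives
\[
5^{2^{k+1}} = \bigl(1 + 2^{k+2} m\bigr)^2 = 1 + 2^{k+3}\bigl(m + 2^{k+1} m^2\bigr),
\]
and since $m$ is odd while $2^{k+1} m^2$ is even, the factor $m + 2^{k+1} m^2$ is again odd. Thus $v_2(5^{2^{k+1}}-1) = k+3$, completing the induction.

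Next I would specialize to $k = e-5$, which is a nonnegative integer precisely because $e \geq 5$. The identity then yields $5^{2^{e-5}} = 1 + 2^{e-3} m$ for some odd $m$, so that $5^{2^{e-5}} \equiv 1 + 2^{e-3} \pmod{2^{e-2}}$, using that $2^{e-3}(m-1)$ is divisible by $2^{e-2}$ since $m-1$ is even. Finally I would multiply through by $5^w$ to obtain
\[
5^{w+2^{e-5}} = 5^w \cdot 5^{2^{e-5}} \equiv 5^w + 2^{e-3} \cdot 5^w \pmod{2^{e-2}},
\]
and since $5^w$ is odd the same parity argument gives $2^{e-3} \cdot 5^w \equiv 2^{e-3} \pmod{2^{e-2}}$, yielding the desired equality. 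The only genuinely substantive step is pinning down the exact valuation $v_2(5^{2^k}-1) = k+2$; once that is in hand, everything else is routine reduction modulo $2^{e-2}$.
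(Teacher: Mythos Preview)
Your proof is correct and follows essentially the same route as the paper: both reduce to the congruence $5^{2^{e-5}} \equiv 1 + 2^{e-3} \pmod{2^{e-2}}$, then multiply by $5^w$ and absorb the odd factor $5^w$ into $2^{e-3}$ modulo $2^{e-2}$. The only difference is that the paper cites a reference for that congruence, whereas you supply a self-contained inductive proof of the exact valuation $v_2(5^{2^k}-1)=k+2$.
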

\begin{proof}
We have
\begin{align*}
    [5^{ 2^{e-5}+w}]_{2^{e-2}} &= [5^w(5^{ 2^{e-5}})]_{2^{e-2}}=[5^w(1+2^{e-3})]_{2^{e-2}}, & &\text{by proof of \cite[Theorem 6.7]{zbMATH01115328}} \\
    &= [5^w+5^w.2^{e-3}]_{2^{e-2}}=[5^w+2^{e-3}]_{2^{e-2}}, & &\text{since $5$ is odd.} \qedhere
\end{align*}
\end{proof}

In the following lemma, we identify the minimal set of generators for \((U_{p^e},\cdot)\).
\begin{lemma} \label{Lem2.3} ~
\begin{enumerate}
    \item When \( p \) is odd, \( U_{p^e} \) is cyclic of order \( p^{e-1}(p-1) \).
    More precisely, for any generator \( g \) of \( U_{p^e} \), any generator of \( U_{p^e} \) can be written as \( g^v \), where \( v \in \mathbb{Z} \) and \( \gcd(v, p^{e-1}(p-1)) = 1 \).

    \item When \( e \geq 3 \), the minimal number of generators for \( U_{2^e} \) is 2.
    More precisely, any set of generators of \( U_{2^e} \) of size 2 is of the form \( \{ [-1][5]^u, [-1]^v[5]^w \} \), where
    \[
    u, w \in \llbracket 0, 2^{e-2} - 1 \rrbracket, \quad \gcd(w, 2) = 1, \quad v \in \{0,1\}, \quad \text{and} \quad 2 \mid u \text{ when } v = 1.
    \]
\end{enumerate}
\end{lemma}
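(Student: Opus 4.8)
The plan is as follows. Part (1) is immediate from the structure theorem recalled at the start of the section: $U_{p^e}$ is cyclic of order $n:=p^{e-1}(p-1)$, so a single element generates it, and for a fixed generator $g$ the power $g^v$ generates $\langle g\rangle$ precisely when $\gcd(v,n)=1$, which is the standard description of the generators of a finite cyclic group. For part (2) I would work throughout with the identification $U_{2^e}=\langle[-1]\rangle\times\langle[5]\rangle\cong\mathbb{Z}/2\mathbb{Z}\times\mathbb{Z}/2^{e-2}\mathbb{Z}$ for $e\geq 3$, so that every element has a unique expression $[-1]^v[5]^w$ with $v\in\{0,1\}$ and $w\in\llbracket 0,2^{e-2}-1\rrbracket$. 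Since $U_{2^e}$ surjects onto the noncyclic group $(\mathbb{Z}/2\mathbb{Z})^2$, it cannot be generated by one element, whereas $\{[-1],[5]\}$ does generate it; hence the minimal number of generators is $2$.

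To characterize the size-$2$ generating sets I would reduce modulo squares. Writing $G=U_{2^e}$ and $G^2=\langle[5]^2\rangle$, the quotient $\bar G:=G/G^2$ is elementary abelian of order $4$, hence a two-dimensional vector space over $\mathbb{F}_2$ in which the classes of $[-1]$ and $[5]$ form a basis; the class of $[-1]^v[5]^w$ is the vector $(v\bmod 2,\,w\bmod 2)$. The crucial input is the Burnside basis theorem for the finite $2$-group $G$: a subset generates $G$ if and only if its image generates $\bar G$. Necessity is clear because the quotient map $G\to\bar G$ is a surjective homomorphism; sufficiency is the assertion that $G^2=\Phi(G)$ consists of non-generators, which holds since $G$ is abelian. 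Thus $\{a_1,a_2\}$ generates $G$ if and only if the two associated vectors are linearly independent over $\mathbb{F}_2$, equivalently distinct and both nonzero.

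It then remains to match this criterion with the asymmetric normal form in the statement. In any generating pair at least one element must have odd $[-1]$-exponent (otherwise both images lie in $\langle(0,1)\rangle$) and at least one must have odd $[5]$-exponent (otherwise both lie in $\langle(1,0)\rangle$); inspecting the three bases of $(\mathbb{Z}/2\mathbb{Z})^2$ shows that these two roles can always be assigned to distinct elements of the pair. I would therefore label the pair so that the first element has odd $[-1]$-exponent, normalized via $[-1]^2=1$ to the shape $[-1][5]^u$, and the second element has odd $[5]$-exponent, i.e.\ $[-1]^v[5]^w$ with $\gcd(w,2)=1$. Their images are then $(1,u\bmod 2)$ and $(v,1)$, which are distinct, and hence a basis, exactly when it is not the case that $v=1$ and $u$ is odd; this is precisely the condition \enquote{$2\mid u$ when $v=1$}. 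The converse inclusion, that every set of the displayed form is generating, is the same computation read in reverse.

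I expect the main obstacle to be not any single calculation but the bookkeeping of this final matching step. Because the displayed normal form is asymmetric in the two generators, I must confirm that every size-$2$ generating set — equivalently every unordered basis of $(\mathbb{Z}/2\mathbb{Z})^2$ — admits a labeling compatible with the normalization, and that the three stated constraints ($\gcd(w,2)=1$, $v\in\{0,1\}$, and $2\mid u$ when $v=1$) carve out exactly these bases without admitting any non-generating pair.
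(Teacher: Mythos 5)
Your proposal is correct, but it takes a genuinely different route from the paper for part (2). The paper works directly in $U_{2^e} = \langle [-1]\rangle \times \langle [5]\rangle$ and writes down an explicit system of congruences (that $[-1]$ and $[5]$ lie in the span of the candidate pair), which it then solves by case analysis on the parities of the exponents, exhibiting explicit solutions $(x,y,x',y')$ in the generating cases. You instead invoke the Frattini-quotient (Burnside basis) principle: since $G = U_{2^e}$ is a finite abelian $2$-group with $\Phi(G) = G^2 = \langle [5]^2\rangle$ and $G/G^2 \cong (\mathbb{Z}/2\mathbb{Z})^2$, a pair generates $G$ if and only if its image is a basis of this $\mathbb{F}_2$-plane, i.e.\ the two image vectors are nonzero and distinct; matching the three bases of $(\mathbb{Z}/2\mathbb{Z})^2$ against the asymmetric normal form then yields exactly the stated conditions (the excluded case $v=1$, $u$ odd being precisely when the two images coincide). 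Your final bookkeeping is sound: in every basis exactly one assignment of roles is compatible with the normalization $\gcd(w,2)=1$, and conversely every pair of the displayed form has independent images. What your approach buys is conceptual clarity and brevity --- generation is detected modulo squares, the case analysis collapses to enumerating bases of $\mathbb{F}_2^2$, and the argument generalizes to other finite abelian $p$-groups. What the paper's approach buys is explicitness: it produces the actual exponents expressing $[-1]$ and $[5]$ in terms of the new generators, without appealing to the non-generator property of $\Phi(G)$ (though, as you note, for abelian $G$ that property has a short self-contained proof). Either argument is acceptable; yours is, if anything, the cleaner one.
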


\begin{proof} ~
\begin{enumerate}
    \item $U_{p^e}$ is cyclic of order $p^{e-1}(p-1)$ (see \cite[Theorem 6.7]{zbMATH01115328}).
    Any other generator of \( U_{p^e} \) must be of the form \( g^v \), where \( v \in \mathbb{Z} \) and \( \gcd(v, p^{e-1}(p-1)) = 1 \), ensuring that \( v \) is coprime to the order of \( g \).
    %(In the case of $e=1$, we actually only have $\gcd(v,p-1) = 1$ but we can increase $v$ by $p-1$ if necessary to obtain $\gcd(v,p^{e-1}(p-1)) = 1$.) \sophie{tbd}
    \item By \cite[Theorem 6.10]{zbMATH02016545}, \( U_{2^e} \) is non-cyclic and generated by \([-1]\) and \([5]\), thus the minimal cardinality for a set of generators of \( U_{2^e} \) is 2.
    We now compute all the sets of generators with cardinality $2$.
    Suppose that \(\{[-1]^{t}[5]^{u}, [-1]^{v}[5]^{w}\}\) forms a system of generators, $u,w \in \llbracket 0, 2^{e-2}-1 \rrbracket$ and $t, v \in \{ 0, 1\}$.
    Then there exist integers \(x, y, x', y'\) such that
    \[
        \left\{
        \begin{aligned}
            ([-1]^{t}[5]^{u})^{x}([-1]^{v}[5]^{w})^{y} &= [-1], \\
            ([-1]^{t}[5]^{u})^{x'}([-1]^{v}[5]^{w})^{y'} &= [5].
        \end{aligned}
        \right.
    \]
    Since \( U_{2^e} \) is the direct product of the group generated by \( [-1] \) and the group generated by \( [5] \), the previous equalities hold if and only if there exist integers \( x, y, x', y' \) satisfying the system:
    \begin{equation} \label{eq:E}
        \left\{
            \begin{aligned}
                [tx + vy]_2   &= [1]_2      & \text{(E1)}\ \\
                [ux + wy]_{2^{e-2}}  &= [0]_{2^{e-2}} & \text{(E2)}\ \\
                [tx' + vy']_2 &\equiv [0]_2       & \text{(E3)}\ \\
                [ux' + wy']_{2^{e-2}} &\equiv [1]_{2^{e-2}}  & \text{(E4)}.
            \end{aligned}
        \right.
        \tag{E}
    \end{equation}
If both \( u \) and \( w \) are even, equation (E4) leads to a contradiction.
Hence, without loss of generality, we assume that \( w \) is odd, due to the symmetry \( (t,u) \leftrightarrow (v,w) \) in the system.

When \( t = 0 \) and \( v = 0 \), equation (E1) is not solvable, which means the system has no solution in this case.

Now, solving the case \( t = 0 \) and \( v = 1 \) is equivalent to solving the case \( t = 1 \) and \( v = 0 \), thanks to the symmetry of the system.
Specifically, when \( t = 0 \) and \( v = 1 \), equation (E3) implies \( [y']_2= [0]_2 \), and equation (E4) implies \( [ux']_2 = [1]_2 \).
Thus, we deduce that \( [u]_2= [1]_2  \).
The symmetry \( (t,u) \leftrightarrow (v,w) \) is valid, as we assumed \( w \) to be odd.

As a result, we can assume without loss of generality that \( t = 1 \).

When \( v = 1 \), subtracting equation (E2) from equation \(w\)(E1) gives \( [(w - u)x]_2=[1]_2 \), as \( e \geq 3 \).
This implies \([ w - u]_2 =[1]_2 \), leading to \( 2 \mid u \) since \( w \) is odd.

Finally, when \( t = 1 \), and either \( v = 0 \) or \( (v = 1 \text{ and } 2 \mid u) \), the system (E) always has a solution.
Specifically, the solutions are:

\begin{itemize}
    \item \( (x, y, x', y') = (w, -u, 0, w') \) when \( v = 0 \),
    \item \( (x, y, x', y') = (w, -u, 1, w'(1 - u)) \) when \( v = 1 \) and \( 2 \mid u \),
\end{itemize}
where \( w' \) is the multiplicative inverse of \( w \) modulo \( 2^e \).
    \qedhere
\end{enumerate}
\end{proof}

\subsection{The structure of the automorphism  group of the unit group modulo \texorpdfstring{$p^e$}{}}
The following proposition establishes the structure of the automorphism group of \( (U_{p^e}, \cdot) \) for any prime power \( p^e \).
\begin{proposition} \label{Prop2.3} ~
\begin{enumerate}
    \item Let \( p \) be an odd prime.
    The automorphism group \(\Aut(U_{p^e}, \cdot)\) of the group \( (U_{p^e}, \cdot) \) is given by
    \[
    \Aut(U_{p^e}, \cdot) = \left\{ \chi_{p^e, t} \mid t \in \llbracket 0, p^{e-1}(p-1)-1 \rrbracket, \, \gcd(t, p^{e-1}(p-1)) = 1 \right\},
    \]
    where for any \( t \) with \(\gcd(t, p^{e-1}(p-1)) = 1 \), the map \(\chi_{p^e, t}\) is defined by \( \chi_{p^e, t}(a) = a^t \), for all $a\in U_{p^e}$.

    Moreover, the order of the automorphism group is
    \[
    \left| \Aut(U_{p^e}, \cdot) \right| = \varphi(p^{e-1}(p-1)),
    \]
    where \(\varphi\) denotes Euler's totient function.

    \item For the case where $p=2$, we consider the following cases:
    \begin{enumerate}
        \item When \( e \in \{1, 2\} \), we have \( \Aut(U_{2^e}, \cdot) = \{\Id\} \), where \(\Id\) denotes the identity automorphism.

        \item When \( e = 3 \), the automorphism group is:
        \[
            \Aut(U_{8}, \cdot) = \left\{ \chi_{8,\sigma} \middle| \sigma \in S_3 \right\},
        \]
        where for any \( \sigma \in S_3 \), the automorphism \(\chi_{8,\sigma}\) is defined as:
        \[
            \phi_{\sigma}(a) =
            \begin{cases}
                [1] & \text{if } a = [1], \\
                h(\sigma(h^{-1}(a))) & \text{if } a \neq [1],
            \end{cases}
        \]
        with \( h \) being the map from \(\{1, 2, 3\}\) to \(\{[-1], [5], [-5]\}\) such that \( h(1) = [-1] \), \( h(2) = [5] \), and \( h(3) = [-5] \).
        In particular, \[ \left| \Aut(U_{8}, \cdot) \right| = 6 .\]

        \item When \( e \geq 4 \), the automorphism group is given by:
        \[
            \Aut(U_{2^e}, \cdot) = \left\{ \chi_{2^e,t} \middle| t \in T_{2^e} \right\},
        \]
        where
        \[
           \ \ \ \ \ \ T_{2^e} = \left\{ (t_1, t_2, t_3) \middle| t_1 \in \{0, 2^{e-3}\}, t_2 \in \{0, 1\}, t_3 \in \llbracket 0, 2^{e-2}-1 \rrbracket, \gcd(t_3, 2) = 1 \right\},
        \]
        and, for each \( t = (t_1, t_2, t_3) \in T_{2^e} \), the automorphism \(\chi_{2^e,t}\) maps \([-1]\) to \([-1][5]^{t_1}\) and \([5]\) to \([-1]^{t_2}[5]^{t_3}\).
        The size of the automorphism group is:
        \[
            |\Aut(U_{2^e}, \cdot)| = 2^{e-1}.
        \]
    \end{enumerate}
\end{enumerate}
\end{proposition}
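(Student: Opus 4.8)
The plan is to handle the four regimes of the statement separately, in each case reducing the problem to understanding homomorphisms out of the explicit generator description of $U_{p^e}$ recalled at the start of the subsection, and then invoking the generation analysis of Lemma \ref{Lem2.3} together with the order formula of Lemma \ref{Lem2.1}. For odd $p$, since $U_{p^e}$ is cyclic of order $n = p^{e-1}(p-1)$ with a chosen generator $g$, any automorphism is determined by the image of $g$, which must again be a generator. By Lemma \ref{Lem2.3}(1) these are exactly the $g^t$ with $\gcd(t,n)=1$, and the resulting map is $a \mapsto a^t$; conversely each such $t$ yields an automorphism, so the count is $\varphi(n)$.

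The cases $e \in \{1,2\}$ are immediate, since $U_2$ is trivial and $U_4 \cong \mathbb{Z}/2\mathbb{Z}$ admits no nontrivial automorphism. For $e=3$, the group $U_8 = \{[1],[-1],[5],[-5]\}$ is the Klein four-group, so specifying the images of two of its three involutions determines an automorphism and any two distinct involutions generate; every permutation of $\{[-1],[5],[-5]\}$ thus extends uniquely to an automorphism, giving a copy of $S_3$. Transporting this action along the bijection $h$ produces the stated description and the order $6$.

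The substantive case is $e \geq 4$, where $U_{2^e} = \langle[-1]\rangle \times \langle[5]\rangle$ with $o([-1])=2$ and $o([5])=2^{e-2}$, so an automorphism $\phi$ is determined by the pair $(\phi([-1]),\phi([5]))$. As automorphisms preserve orders, $\phi([-1])$ has order $2$ and $\phi([5])$ has order $2^{e-2}$; by Lemma \ref{Lem2.1} the former lies in $\{[-1],[5]^{2^{e-3}},[-1][5]^{2^{e-3}}\}$, while the latter must be of the form $[-1]^{t_2}[5]^{t_3}$ with $t_3$ odd and $t_2 \in \{0,1\}$. The crux is to exclude $\phi([-1]) = [5]^{2^{e-3}}$: I would argue that in the classification of Lemma \ref{Lem2.3}(2) every generating pair, after relabeling, consists of a generator with $[-1]$-exponent exactly $1$ and a generator with odd $[5]$-exponent, whereas $[5]^{2^{e-3}}$ has $[-1]$-exponent $0$ and (for $e \geq 4$) even $[5]$-exponent, so it can fill neither slot and the pair $\{[5]^{2^{e-3}},\,[-1]^{t_2}[5]^{t_3}\}$ fails to generate. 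This forces $\phi([-1]) = [-1][5]^{t_1}$ with $t_1 \in \{0,2^{e-3}\}$, yielding precisely the parameter set $T_{2^e}$.

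For the converse, for each $(t_1,t_2,t_3) \in T_{2^e}$ I would verify that the assignment $[-1] \mapsto [-1][5]^{t_1}$, $[5] \mapsto [-1]^{t_2}[5]^{t_3}$ respects the defining relations: the image of $[-1]$ has order $2$ by Lemma \ref{Lem2.1}, and the image of $[5]$ has order dividing the exponent $2^{e-2}$, so the assignment extends to an endomorphism. Its image generates $U_{2^e}$ by Lemma \ref{Lem2.3}(2), where the condition ``$2 \mid u$ when $v=1$'' holds automatically because $t_1$ is even; hence the endomorphism is surjective, and being a surjective self-map of a finite group, bijective. Finally I would count $|T_{2^e}| = 2 \cdot 2 \cdot 2^{e-3} = 2^{e-1}$, the last factor being the number of odd residues in $\llbracket 0, 2^{e-2}-1 \rrbracket$. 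The main obstacle throughout is the generation bookkeeping in the case $e \geq 4$ — both excluding the spurious order-$2$ image and confirming that every admissible pair generates — which is exactly where Lemma \ref{Lem2.3}(2) carries the weight of the argument.
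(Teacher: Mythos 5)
Your proof is correct and follows essentially the same route as the paper: it reduces each case to the generator description of $U_{p^e}$, uses Lemma \ref{Lem2.1} and Lemma \ref{Lem2.3} to constrain the images of $[-1]$ and $[5]$, and then verifies that each admissible parameter tuple yields a well-defined surjective (hence bijective) endomorphism. If anything, you are slightly more explicit than the paper in ruling out $\phi([-1])=[5]^{2^{e-3}}$ via the generation classification, a step the paper leaves implicit in its appeal to Lemmas \ref{Lem2.1} and \ref{Lem2.3}.
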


\begin{proof}~
\begin{enumerate}
    \item This is clear since $U_{p^e}$ is cyclic.
    \item
    \begin{enumerate}
        \item This is clear.
        \item  The result follows immediately from the isomorphism between \( U_8 \) and the Klein group.
        For the isomorphism between \( U_8 \) and the Klein group, see \cite[Section 6.2]{zbMATH01115328}, and for the isomorphism between the automorphism group of the Klein group and \( S_3 \), refer to \cite[Exercise 5.3.7, p. 254]{goodman2014algebra}.

        \item We now aim to construct an automorphism \( \phi \) of \( U_{2^e} \) when $e \geq 4$.
        To do this, we observe that such an automorphism \(\phi\) must satisfy the following properties:
        \begin{enumerate}
            \item \(\phi\) sends the set of generators \(\{[-1], [5]\}\) to a set of generators of \(U_{2^e}\);
            \item \(\phi([-1])\) has order 2; and
            \item \(\phi([5])\) has order \(2^{e-2}\).
        \end{enumerate}

        Therefore, by Lemma \ref{Lem2.1} and Lemma \ref{Lem2.3}, we deduce that \(\phi([-1]) = [-1][5]^{t_1}\) and \(\phi([5]) = [-1]^{t_2}[5]^{t_3}\), where \((t_1, t_2, t_3) \in T_{2^e}\).

        Let us choose such a map \(\phi\).
        That is, we choose \((t_1, t_2, t_3) \in T_{2^e}\) and set \(\phi([-1]^v[5]^w) = [-1]^{v + t_2 w}[5]^{t_1 v + t_3 w}\) for all \(v, w \in \mathbb{Z}\).
        We now prove that such a \(\phi\) is well-defined.
        Let \(v, w \in \mathbb{Z}\) such that
        \begin{equation}\label{Eq12}
            [-1]^v[5]^w = [1].
        \end{equation}
        We prove that
        \begin{equation}\label{Eq13}
            [-1]^{v + t_2 w}[5]^{t_1 v + t_3 w} = [1].
        \end{equation}

    From the direct product structure of \( U_{2^e} \), we conclude that Equation \eqref{Eq12} is equivalent to \( [v]_2 = [0]_2 \) and \( [w]_{2^{e-2}} = [0]_{2^{e-2}} \).
  
    Therefore, since \( e \geq 4 \), it follows from Equation \eqref{Eq12} that \([v + t_2 w]_2 = [0]_2\).
    Given that \( t_1 \in \{0, 2^{e-3}\} \) and \([v]_2 = [0]_2\), we have \([t_1 v]_{2^{e-2}} = [0]_{2^{e-2}}\).
    Hence, \([t_1 v + t_3 w]_{2^{e-2}} = [0]_{2^{e-2}}\).
    From the direct product structure of \( U_{2^e} \), we can conclude that \(\phi\) is well-defined.
  
    Since \(\phi\) is surjective by construction, satisfying condition (i), it is well-defined.
    Additionally, as it maps a finite set onto itself, surjectivity implies bijectivity.
    Thus, \(\phi\) is an automorphism, as required. \qedhere
    \end{enumerate}
\end{enumerate}
\end{proof}

Given $0 \leq f \leq e$ and $a\in \mathbb{Z}$, it is clear that $[a]\in U_{p^e}$ implies $[a]_{p^f} \in U_{p^f}$.
Moreover,
\begin{itemize}
    \item when $p$ is odd, if $[a] \in U_{p^e}$ is a generator for $U_{p^e}$ then $[a]_{p^f}$ is a generator for $U_{p^f}$;
    \item  when $p$ is $2$ and $e\geq 3$, if $\lbrace [a], [b]\rbrace $ is a system of generators for $U_{2^e}$ then $\lbrace [a]_{2^f}, [b]_{2^f}\rbrace$ is a system of generators for $U_{2^f}$.
\end{itemize}
We now define a subgroup of the automorphism group of \((U_{p^e}, \cdot)\), which will turn out to be precisely the set of automorphisms of \((U_{p^e}, \cdot)\) that can be lifted to automorphisms of \((\mathbb{Z}/p^e\mathbb{Z}, \cdot)\).
\begin{definitionlemma}\label{Deflem2.5} ~
\begin{enumerate}
    \item We define the subgroup \( \mathcal{A}_{p^e} \) of \( \Aut(U_{p^e}, \cdot) \) as the set of automorphisms of \( U_{p^e} \) that induce an automorphism of \( (U_{p^f}, \cdot) \) for every integer \( f \) satisfying \( 0 \leq f \leq e \).
    In other words, $\phi$ is an element of \( \mathcal{A}_{p^e} \), if for all \( r, r', s, s', f \in \mathbb{Z} \) with \( \gcd(r, p) = \gcd(r', p) = 1 \), \( \phi([r]) = [s] \), \( \phi([r']) = [s'] \), and \( 0 \leq f \leq e \), the following condition holds:
    \[
    [r]_{p^f} = [r']_{p^f} \quad \text{if and only if} \quad [s]_{p^f} = [s']_{p^f}.
    \]

    \item For any \( \phi \in \mathcal{A}_{p^e} \) and any integer \( f \) such that \( 0 \leq f \leq e \), we denote by \( \phi_f \) the automorphism of \( U_{p^f} \) induced by \( \phi \).
    More specifically, the map \( \phi_f \) is defined as:
        \begin{align*}
        \phi_f: (U_{p^f}, \cdot) &\rightarrow (U_{p^f}, \cdot), \\
        [r]_{p^f} &\mapsto [s]_{p^f},
        \end{align*}
    where \( s \) is an integer such that \( \phi([r]) = [s] \).
\end{enumerate}
\end{definitionlemma}
\begin{proof}
By the definition of \( \mathcal{A}_{p^e} \), \( \phi_f \) is a well-defined bijection, as it is an injective map between two finite sets of the same cardinality.

It remains to show that \( \phi_f \) is a homomorphism.
Let \( [r]_{p^f} \) and \( [r']_{p^f} \) be elements in \( U_{p^f} \).
By the definition of \( \phi_f \), we have:
\[
\phi_f([r]_{p^f} [r']_{p^f}) = \phi_f([rr']_{p^f}) = [s'']_{p^f},
\]
where \( s'' \) is the integer such that \( \phi([rr']) = [s''] \).

On the other hand, we also have:
\[
\phi_f([r]_{p^f}) \phi_f([r']_{p^f}) = [s]_{p^f} [s']_{p^f},
\]
where \( s \) and \( s' \) are integers such that \( \phi([r]) = [s] \) and \( \phi([r']) = [s'] \).

Since \( \phi \) is a homomorphism by assumption, we know that:
\[
\phi([rr']) = \phi([r]) \phi([r']).
\]
Thus, we obtain:
\[
[s''] = [s] [s'] = [ss']
\]
from which it follows that
\[
[s'']_{p^f} = [ss']_{p^f} = [s]_{p^f} [s']_{p^f}.
\]
This completes the proof.
\end{proof}

\begin{corollary} \label{theta}
Let \( f \) be an integer such that \( 0 \leq f \leq e \).
The map
\[
\begin{array}{ccclc}
\Theta &: & (\mathcal{A}_{p^e}, \circ) & \to & (\mathcal{A}_{p^f}, \circ) \\
    & & \phi & \mapsto & \phi_f
\end{array}
\]
is a well-defined group homomorphism.

In particular, for any integer \( g \) such that \( 0 \leq g \leq f \leq e \), we have \( (\phi_f)_g = \phi_g \).
\end{corollary}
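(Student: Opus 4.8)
The plan is to prove all three assertions --- that $\Theta$ maps into $\mathcal{A}_{p^f}$ (not merely into $\Aut(U_{p^f},\cdot)$), that $\Theta$ is multiplicative, and that $(\phi_f)_g = \phi_g$ whenever $0 \leq g \leq f \leq e$ --- directly from the defining biconditional of $\mathcal{A}_{p^e}$ recorded in Definition-Lemma \ref{Deflem2.5}. Each reduces to a short chase of a single integer representative through the reduction maps $\mathbb{Z} \to \mathbb{Z}/p^g\mathbb{Z}$, so I do not expect any step to be genuinely hard; the work is entirely bookkeeping.

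First I would establish well-definedness. By Definition-Lemma \ref{Deflem2.5}, for $\phi \in \mathcal{A}_{p^e}$ the map $\phi_f$ is already a group automorphism of $(U_{p^f},\cdot)$, so it remains only to check membership in $\mathcal{A}_{p^f}$, i.e. that its defining biconditional holds for every $g$ with $0 \leq g \leq f$. I would take $r, r'$ coprime to $p$ and choose integer lifts $s, s'$ with $\phi([r]) = [s]$ and $\phi([r']) = [s']$ (automatically coprime to $p$, since $\phi$ is an automorphism of $U_{p^e}$). Then $\phi_f([r]_{p^f}) = [s]_{p^f}$ and $\phi_f([r']_{p^f}) = [s']_{p^f}$, and invoking the hypothesis on $\phi$ for this same pair $s,s'$ and any $g$ with $0 \leq g \leq f \leq e$ gives $[r]_{p^g} = [r']_{p^g}$ if and only if $[s]_{p^g} = [s']_{p^g}$, which is exactly the condition placing $\phi_f$ in $\mathcal{A}_{p^f}$.

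Next I would treat multiplicativity and the compatibility identity together, since both follow by tracking one representative. For $\phi, \psi \in \mathcal{A}_{p^e}$ and $[r]_{p^f} \in U_{p^f}$ with $r$ coprime to $p$, I would pick $s$ with $\psi([r]) = [s]$ and $t$ with $\phi([s]) = [t]$, so that $(\phi \circ \psi)([r]) = [t]$ and hence $(\phi \circ \psi)_f([r]_{p^f}) = [t]_{p^f}$; on the other side $\psi_f([r]_{p^f}) = [s]_{p^f}$, and because $s$ is coprime to $p$ it is a legitimate argument for $\phi_f$, giving $\phi_f([s]_{p^f}) = [t]_{p^f}$, whence $(\phi_f \circ \psi_f)([r]_{p^f}) = [t]_{p^f}$. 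For the in-particular statement, with $0 \leq g \leq f \leq e$ I would choose $s$ with $\phi([r]) = [s]$; then $\phi_f([r]_{p^f}) = [s]_{p^f}$ forces $(\phi_f)_g([r]_{p^g}) = [s]_{p^g}$, while $\phi_g([r]_{p^g}) = [s]_{p^g}$ by definition, so the two maps coincide.

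The only point that needs care --- and the main, if mild, obstacle --- is consistency of representatives: in the membership check one must ensure the $s$ appearing in $\phi_f([r]_{p^f}) = [s]_{p^f}$ can be taken as an honest lift of $\phi([r])$ modulo $p^e$, so that the hypothesis on $\phi$ applies verbatim. I would handle this by fixing all representatives once at the top level $p^e$ and only then reducing, which removes any ambiguity, since two integers congruent modulo $p^f$ are a fortiori congruent modulo $p^g$ for all $g \leq f$.
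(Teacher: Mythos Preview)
Your proposal is correct and follows essentially the same approach as the paper: both arguments track a single integer representative through the reduction maps and appeal to the defining biconditional in Definition-Lemma \ref{Deflem2.5}. The only difference is one of detail --- the paper dispatches the well-definedness (\(\phi_f \in \mathcal{A}_{p^f}\)) and the compatibility \((\phi_f)_g = \phi_g\) in a single sentence as ``follows directly from the definition,'' whereas you spell out the representative chase explicitly; the multiplicativity argument is essentially identical in both.
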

\begin{proof}
It follows directly from the definition of \( \mathcal{A}_{p^e} \) that
\[
\phi_f \in \mathcal{A}_{p^f} \quad \text{and} \quad (\phi_f)_g = \phi_g.
\]
Thus, the map is well-defined.
We now prove that $\Theta$ is a group homomorphism.

Let \( \phi, \phi' \in \mathcal{A}_{p^e} \).
We need to show that
\[
(\phi \circ \phi')_f = \phi_f \circ \phi_f'.
\]
Let \( r \in \mathbb{Z} \) with \( \gcd(r, p) = 1 \).
By the definition of \( \phi_f' \), we have \( \phi_f'([r]_{p^f}) = [s]_{p^f} \), where \( s \) is the integer such that \( \phi'([r]) = [s] \).
Similarly, \( \phi_f([s]_{p^f}) = [s']_{p^f} \), where \( s' \) is the integer such that \( \phi([s]) = [s'] \).
Therefore, we have
\[
\phi([s]) = \phi(\phi'([r])) = [s'].
\]
By the definition of \( (\phi \circ \phi')_f \), \( \phi_f \) and \( \phi_f' \), we deduce that
\[
(\phi \circ \phi')_f([r]_{p^f}) = [s']_{p^f}= \phi_f \circ \phi_f'([r]_{p^f}).
\]
which proves that the map is a group homomorphism.w
\end{proof}

We now compute the monoid \( \mathcal{A}_{p^e} \).
\begin{proposition}\label{Prop2.7}
We have:
\[
\mathcal{A}_{p^e} =
\begin{cases}
    \Aut(U_{p^e}, \cdot), & \text{if } p \text{ is odd}, \\
    \{ \Id \}, & \text{if } p = 2 \text{ and } e \leq 2, \\
    \Aut_{\langle[5]\rangle}(U_{2^e}, \cdot) = \{ \tilde{\chi}_{8,t} \mid t \in \{0,1\} \}, & \text{if } p = 2 \text{ and } e = 3, \\
    \Aut_{\langle [5] \rangle}(U_{2^e}, \cdot) = \{ \chi_{2^e,(t_1, 0, t_3)} \mid (t_1, 0, t_3) \in T_{2^e} \}, & \text{if } p = 2 \text{ and } e \geq 4.
\end{cases}
\]
Here, \( \Aut_{\langle [5] \rangle}(U_{2^e}, \cdot) \) denotes the subgroup of \( \Aut(U_{2^e}, \cdot) \) consisting of automorphisms that preserve \( \langle [5] \rangle \) setwise.
\( T_{2^e} \) and \( \chi_{2^e,(t_1, 0, t_3)} \) are defined in Proposition \ref{Prop2.3}, and for any \( t \in \{0,1\} \), \( \tilde{\chi}_{8,t} \) is the automorphism of \( U_8 \) that maps \( [-1] \) to \( [-1][5]^t \) and \( [5] \) to \( [5] \).
\end{proposition}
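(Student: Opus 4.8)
The plan is to translate membership in $\mathcal{A}_{p^e}$ into a purely group-theoretic condition about preserving a chain of subgroups, and then read off the answer case by case. For $0 \leq f \leq e$, let $\pi_f \colon U_{p^e} \to U_{p^f}$ be the reduction map $[r] \mapsto [r]_{p^f}$; this is a well-defined surjective group homomorphism (every unit mod $p^f$ lifts to a unit mod $p^e$), and I set $K_f = \ker \pi_f$. The first step is to note that, for a fixed $f$, the biconditional in Definition-Lemma \ref{Deflem2.5} says exactly that $[r][r']^{-1} \in K_f \iff \phi\big([r][r']^{-1}\big) \in K_f$. As $[r][r']^{-1}$ ranges over all of $U_{p^e}$ and $\phi$ is a bijective homomorphism, this is equivalent to $\phi(K_f) = K_f$. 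Hence
\[
\mathcal{A}_{p^e} = \{\, \phi \in \Aut(U_{p^e},\cdot) : \phi(K_f) = K_f \text{ for all } 0 \leq f \leq e \,\}.
\]

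The two easy cases fall out immediately. When $p$ is odd, $U_{p^e}$ is cyclic by Lemma \ref{Lem2.3}, so each $K_f$ is the unique subgroup of its order $p^{e-f}$, hence characteristic and fixed by every automorphism; thus $\mathcal{A}_{p^e} = \Aut(U_{p^e},\cdot)$. When $p = 2$ and $e \leq 2$ we have $\Aut(U_{2^e},\cdot) = \{\Id\}$ by Proposition \ref{Prop2.3}, so there is nothing more to check.

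The heart of the argument is $p = 2$, $e \geq 3$, where $U_{2^e} = \langle[-1]\rangle \times \langle[5]\rangle$. Here I would compute the kernels explicitly. Since $5 \equiv 1 \pmod 4$ while $-1 \equiv 3 \pmod 4$, reduction mod $4$ kills $\langle[5]\rangle$ but not $[-1]$; as $\pi_2$ is surjective onto the order-$2$ group $U_4$, an order count gives $|K_2| = 2^{e-2} = |\langle[5]\rangle|$, whence $K_2 = \langle[5]\rangle$. For $3 \leq f \leq e$, writing an element as $[-1]^v[5]^w$ and using the direct-product description of $U_{2^f}$, the condition $[-1]^v[5]^w \equiv 1 \pmod{2^f}$ forces $v$ even and $2^{f-2} \mid w$, so $K_f = \langle [5]^{2^{f-2}}\rangle$, the unique subgroup of the cyclic group $\langle[5]\rangle$ of order $2^{e-f}$. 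In particular the kernels form a chain $U_{2^e} = K_0 = K_1 \supsetneq K_2 = \langle[5]\rangle \supsetneq K_3 \supsetneq \cdots \supsetneq K_e = \{[1]\}$, with every $K_f$ for $f \geq 2$ contained in $\langle[5]\rangle$. The key reduction is then that preserving all the $K_f$ is equivalent to the single condition $\phi(\langle[5]\rangle) = \langle[5]\rangle$: necessity is immediate from $K_2 = \langle[5]\rangle$, and for sufficiency, if $\phi$ preserves $\langle[5]\rangle$ it restricts to an automorphism of this cyclic group and therefore fixes each of its characteristic subgroups $K_f$ ($f \geq 3$), while $K_0 = K_1 = U_{2^e}$ are fixed trivially. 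This gives $\mathcal{A}_{2^e} = \Aut_{\langle[5]\rangle}(U_{2^e},\cdot)$. Finally I would identify this subgroup with the stated list: for $e \geq 4$, using the parametrization of Proposition \ref{Prop2.3}, $\phi = \chi_{2^e,(t_1,t_2,t_3)}$ preserves $\langle[5]\rangle$ iff $\phi([5]) = [-1]^{t_2}[5]^{t_3} \in \langle[5]\rangle$, i.e. iff $t_2 = 0$; and for $e = 3$ the automorphisms of $U_8 \cong (\mathbb{Z}/2\mathbb{Z})^2$ fixing $[5]$ are exactly the identity $\tilde{\chi}_{8,0}$ and the transposition $\tilde{\chi}_{8,1}$ interchanging $[-1]$ and $[-5]$.

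The step I expect to demand the most care is the explicit determination of the kernels $K_f$ and, above all, recognizing that the weakest-looking requirement ($f = 2$) is equivalent to the entire family. The subtlety is that $\langle[5]\rangle$ is \emph{not} characteristic in $U_{2^e}$ — the automorphisms with $t_2 = 1$ move it — so $\phi(\langle[5]\rangle) = \langle[5]\rangle$ is a genuine constraint; what rescues the argument is that every deeper kernel lies inside $\langle[5]\rangle$, where cyclicity makes it characteristic, so no new conditions arise for $f \geq 3$.
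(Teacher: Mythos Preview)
Your proof is correct and takes a genuinely different route from the paper's. The paper works directly with the biconditional in Definition--Lemma~\ref{Deflem2.5}: for each parametrized automorphism it tests the condition at $f=2$ (specifically with $r=5$, $r'=1$) to extract the constraint $t_2=0$ (resp.\ $t_5=0$ when $e=3$), and then, having imposed that constraint, verifies the full biconditional for every $f$ by an explicit chain of equivalences in the direct-product coordinates of $U_{2^f}$. You instead reformulate membership in $\mathcal{A}_{p^e}$ as the structural condition $\phi(K_f)=K_f$ for all $f$, compute the kernels once, and then exploit the cyclicity of $\langle[5]\rangle$ to collapse the whole family $f\geq 2$ down to the single requirement $\phi(\langle[5]\rangle)=\langle[5]\rangle$. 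Your argument is shorter and more conceptual --- the odd-$p$ case becomes a one-line observation about characteristic subgroups of a cyclic group rather than a four-step equivalence, and the verification for $f\geq 3$ is replaced by ``characteristic in a cyclic subgroup'' --- while the paper's approach has the compensating virtue of being entirely hands-on, never needing to name the kernels or invoke the notion of a characteristic subgroup.
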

\begin{proof}
Let \( \phi \in \operatorname{Aut}(U_{p^e}, \cdot) \).
By Definition-Lemma \ref{Deflem2.5}, we know that \( \phi \in \mathcal{A}_{p^e} \) if and only if the following condition holds:
for all \( r, r', s, s' \in \mathbb{Z} \) with \( \gcd(r, p) = \gcd(r', p) = 1 \), if \( \phi([r]) = [s] \) and \( \phi([r']) = [s'] \), then for every integer \( f \) such that \( 0 \leq f \leq e \), we have
\begin{equation}\label{iff}
    [r]_{p^f} = [r']_{p^f} \quad \text{if and only if} \quad [s]_{p^f} = [s']_{p^f}.
\end{equation}

\begin{description}
    \item[\sf{When $p$ is odd}]  Let \( r, r', s, s' \in \mathbb{Z} \) with \( \gcd(r, p) = \gcd(r', p) = 1 \), and let \( \phi([r]) = [s] \), \( \phi([r']) = [s'] \), with \( a \) being a generator of \( U_{p^e} \).
    By Proposition \ref{Prop2.3}, there exists \( t \in \mathbb{Z} \) such that \( \gcd(t, p^{e-1}(p-1)) = 1 \) and \( \phi(a) = a^t \).
    We can express \( [r] = a^u \) and \( [r'] = a^{u'} \) for some \( u, u' \in \mathbb{Z} \).

    Proving that Equation (\ref{iff}) holds is equivalent to showing that
    \[
    a^u = a^{u'} \quad \text{if and only if} \quad a^{ut} = a^{u't}.
    \]
    The latter follows from the equivalence of the following statements:
    \begin{enumerate}[label=(\roman*)]
        \item \( a^u = a^{u'} \);
        \item \( [u]_{p^{f-1}(p-1)} = [u']_{p^{f-1}(p-1)} \);
        \item \( [ut]_{p^{f-1}(p-1)} = [u't]_{p^{f-1}(p-1)} \);
        \item \( a^{ut} = a^{u't} \).
    \end{enumerate}
    Note that (ii) is equivalent to (iii) because \( \gcd(t, p^{e-1}(p-1)) = 1 \).
    Thus, the Proposition is proven for \( p \) odd.

    \item[\sf{When $p=2$ and \(e\leq 2\)}] we have $\phi = \Id$, and the result follows immediately.

    \item[\sf{When $p=2$ and \(e=3\)}] By Proposition \ref{Prop2.3} (2)(b), the automorphism \( \phi \) is given by:
    \[
    \phi([-1]) = [-1]^{t_3} [5]^{t_4}, \quad \phi([5]) = [-1]^{t_5} [5]^{t_6},
    \]
    where \( (t_3, t_4, t_5, t_6) \) belongs to the set
    \[
    \{ (1,0,0,1), (1,0,1,1), (0,1,1,0), (0,1,1,1), (1,1,1,0), (1,1,0,1) \}.
    \]
    Since \( [5]_4 = [1]_4 \), the equivalence (\ref{iff}) is satisfied for \( r = 5 \), \( r' = 1 \), and \( f = 2 \) if and only if \( [-1]_4^{t_5} = [1]_4 \).
    This implies \( t_5 = 0 \), and consequently, \( t_3 = t_6 = 1 \), while \( t_4 \) remains either \( 0 \) or \( 1 \).

    Now, setting \( t_5 = 0 \), let \( r, r', s, s' \in \mathbb{Z} \) be such that \( \gcd(r, p) = \gcd(r', p) = 1 \) and
    \[
    \phi([r]) = [s] \quad\text{and}\quad \phi([r']) = [s'].
    \]
    Expressing \( [r] \) and \( [r'] \) in terms of the generators, we write
    \[
    [r] = [-1]^v [5]^w \quad\text{and}\quad [r'] = [-1]^{v'} [5]^{w'},
    \]
    for some \( v, v' \in \{0,1\} \) and \( w, w' \in \llbracket 0, 2^{e-2} - 1 \rrbracket \).
    Then, applying \( \phi \), we obtain:
    \[
    \phi([r]) = [-1]^v [5]^{t_4 v + w} \quad\text{and}\quad \phi([r']) = [-1]^{v'} [5]^{t_4 v' + w'}.
    \]
    Thus the equivalence (\ref{iff}) holds automatically for \( f \leq 3 \).
    Thus, the corollary follows.

    \item[\sf{When $p=2$ and \(e\geq 4\)}]
    By Proposition \ref{Prop2.3} (c), there exists \( (t_1, t_2, t_3) \in T_{2^e} \) such that
    \[
    \phi([-1]) = [-1][5]^{t_1} \quad\text{and}\quad \phi([5]) = [-1]^{t_2} [5]^{t_3}.
    \]
    Since \( [5]_4 = [1]_4 \), the equivalence (\ref{iff}) is satisfied for \( r = 5 \), \( r' = 1 \), and \( f = 2 \) if and only if \( [-1]_4^{t_2} = [1]_4 \), which implies \( t_2 = 0 \).

    We now set \( t_2 = 0 \).
    Let \( r, r', s, s' \in \mathbb{Z} \) be such that \( \gcd(r, p) = \gcd(r', p) = 1 \) and
    \[
    \phi([r]) = [s] \quad\text{and}\quad \phi([r']) = [s'].
    \]
    Expressing \( [r] \) and \( [r'] \) in terms of generators, we write
    \[
    [r] = [-1]^v [5]^w \quad\text{and}\quad [r'] = [-1]^{v'} [5]^{w'},
    \]
    for some \( v, v' \in \{0, 1\} \) and \( w, w' \in \llbracket 0, 2^{e-2} - 1 \rrbracket \).
    Applying \( \phi \), we obtain:
    \[
    \phi([r]) = [-1]^v [5]^{t_1 v + t_3 w} \quad\text{and}\quad \phi([r']) = [-1]^{v'} [5]^{t_1 v' + t_3 w'}.
    \]
    The equivalence (\ref{iff}) holds automatically for \( f \leq 2 \).
    Now, suppose \( f \geq 3 \).
    The following statements are equivalent:
    \begin{enumerate}[label=(\roman*)]
        \item \( [(-1)^{v} 5^{t_1 v + t_3 w}]_{2^f} = [(-1)^{v'} 5^{t_1 v' + t_3 w'}]_{2^f} \).
        \item \( [v - v']_2 = [0]_2 \) and \( [t_1 (v - v') + t_3 (w - w')]_{2^{f-2}} = [0]_{2^{f-2}} \).
        \item \( [v - v']_2 = [0]_2 \) and \( [w - w']_{2^{f-2}} = [0]_{2^{f-2}} \).
        \item \( [(-1)^v 5^w]_{2^f} = [(-1)^{v'} 5^{w'}]_{2^f} \).
    \end{enumerate}
    Indeed,
    \begin{itemize}
        \item The equivalences \( (i) \iff (ii) \) and \( (iii) \iff (iv) \) follow directly from the fact that \( U_{2^e} \) is the direct product of the cyclic group generated by \( [5] \), which has order \( 2^{e-2} \), and the subgroup generated by \( [-1] \), which has order 2.
        \item Since \( f \geq 3 \) and \( t_1 \in \{0, 2^{e-3}\} \), it is clear that \( (iii) \implies (ii) \).
        \item To prove \( (ii) \implies (iii) \), given \( t_1 \in \{0, 2^{e-3}\} \) and \( \gcd(t_3, 2) = 1 \), we deduce that \( [t_1 (v - v') + t_3 (w - w')]_{2^{f-2}} = [0]_{2^{f-2}} \) implies \( [w - w']_{2^{f-2}} = [0]_{2^{f-2}} \).
    \end{itemize}
    Thus, the corollary follows. \qedhere
\end{description}
\end{proof}

\begin{remark}\label{Ape}
We observe that the map \( \Theta \), defined in Corollary \ref{theta}, is surjective in the following cases:
\begin{itemize}
    \item when \( p \) is odd;
    \item when \( p = 2 \) and \( e \leq 3 \) or \( f \leq 2 \).
\end{itemize}
However, for \( p = 2 \) with \( e \geq 4 \), \( f \geq 4 \), and \( f < e \), the element \( \chi_{2^f, (2^{f-3}, 0, t_3)} \) has no preimage for any odd \( t_3 \), using the notation from Proposition \ref{Prop2.3}, and \( \Theta( \chi_{2^e, (2^{e-3}, 0, t_3)}) =  \chi_{2^f, (0, 0, t_3)} \).
Moreover, when \( p = 2 \), \( e \geq 4 \), and \( f = 3 \), Proposition \ref{Prop2.7} gives \( \Theta(\phi) = \Id \) for all \( \phi \in \mathcal{A}_{p^e} \).
\end{remark}

From Proposition \ref{Prop2.3}, we obtain the following group isomorphisms:
\begin{itemize}
    \item If \( p \) is odd, then \( (\Aut(U_{p^e}, \cdot), \circ) \cong (U_{p^{e-1}(p-1)}, \cdot) \).
    \item If \( e \leq 2 \), then \( (\Aut(U_{2^e}, \cdot), \circ) = (\{\Id\}, \circ) \).
    \item If \( e = 3 \), then \( (\Aut(U_{2^3}, \cdot), \circ) \cong (S_3, \circ) \).
\end{itemize}

The remainder of this section is dedicated to characterizing the group structure of \( (\Aut(U_{2^e}, \cdot), \circ) \) for any \( e \geq 4 \).
To establish the necessary notation, we recall the definition of a central product (see also \cite[Theorem 5.3]{zbMATH03724683}).
\begin{definition}\label{Def5.9}
Let \( G_1 \) and \( G_2 \) be groups.
\begin{enumerate}
    \item Given a group \( C \) and a pair of injective group homomorphisms \(\iota = (\iota_1, \iota_2)\), where each \(\iota_i : C \hookrightarrow Z(G_i)\) for \(i \in \{1, 2\}\), we define the \textsf{external central product with respect to \((G_1, G_2, C, \iota)\)}, denoted by \( G_1 \centralprod_{\iota} G_2 \), as the quotient of the direct product \( G_1 \times G_2 \) endowed with a binary operation defined componentwise by its diagonal subgroup \(\Delta(C) = \{ (\iota_1(c), \iota_2(c)) \mid c \in C \}\), that is
    \[
    G_1 \centralprod_{\iota} G_2 = \frac{G_1 \times G_2}{\Delta(C)}.
    \]

    \item We say that a group \( G \) is a \textsf{central product of \( G_1 \) and \( G_2 \)} if there exists a group \( C \) and \(\iota = (\iota_1, \iota_2)\), where \(\iota_i : C \hookrightarrow Z(G_i)\) is an injective group morphism for \( i \in \{1, 2\} \), such that \( G = G_1 \centralprod_{\iota} G_2 \).

    \item We say that \( G \) is the \textsf{internal central product of \( G_1 \) and \( G_2 \)} if \( G_i \leq G \) for \( i \in \{1, 2\} \), \( G = G_1 G_2 \), and the elements of $G_1$ and $G_2$ commute.
\end{enumerate}
\end{definition}

We first observe that if \( C = \{1\} \) and \(\iota = (\iota_1, \iota_2)\), where each \(\iota_i : \{1\} \hookrightarrow Z(G_i)\) maps \(1\) to \(1\) for \( i \in \{1, 2\} \), then the central product \( G_1 \centralprod_{\iota} G_2 \) reduces to the direct product \( G_1 \times G_2 \).
Moreover, the concepts of internal and external central products coincide.
Specifically, any external central product \( G_1 \centralprod_{\iota} G_2 \) can be realized as an internal central product of the subgroups
\[
\frac{G_1 \times \iota_2(C)}{\Delta (C)} \quad \text{and} \quad \frac{\iota_1(C) \times G_2}{\Delta (C)}.
\]
Conversely, if \( G \) is an internal central product of \( G_1 \) and \( G_2 \), then defining \( C = G_1 \cap G_2 \) with the natural inclusion maps \(\iota_i : C \to Z(G_i)\) yields an isomorphism \( G \cong G_1 \centralprod_{\iota} G_2 \).

With this framework in place, we now define a group \((V_e, \star)\) explicitly and establish its isomorphism with \( (\Aut(U_{2^e}, \cdot), \circ) \).
This characterization allows us to determine the precise structure of the automorphism group.
\begin{definitionlemma} \label{def:ve}
Let $e \geq 4$, \( V_e = U_{2^{e-2}} \times \mathbb{Z}/2\mathbb{Z} \times \mathbb{Z}/2\mathbb{Z} \), and \(\star\) be the binary operation on \(V_e\) defined for all \(a_1, a_2, a_3, a_1', a_2', a_3' \in \mathbb{Z}\) with $\gcd(a_1, 2)= \gcd(a_1',2)=1$ as:
\[ \left([a_1]_{2^{e-2}}, [a_2]_2, [{a_3}]_2\right) \star \left([a_1']_{2^{e-2}}, [a_2']_2,[a_3']_2\right) = \left([a_1 a_1' + 2^{e-3} a_3 a_2']_{2^{e-2}}, [a_2 + a_2']_2, \left[a_3 + a_3'\right]_2\right). \]

Then \( (V_e, \star) \) is a group.
In particular:
\begin{itemize}
    \item its identity element is \(([1]_{2^{e-2}}, [0]_2, [0]_2)\), and
    \item the inverse of $\left([a_1]_{2^{e-2}}, [a_2]_2, [{a_3}]_2\right)$ with respect to \(\star\) is:
    \[ \left([1 + 2^{e-3} a_3 a_2]_{2^{e-2}} [a_1]_{2^{e-2}}^{-1}, [-a_2]_2, [{-a_3}]_2 \right). \]
\end{itemize}
\end{definitionlemma}

\begin{proof}
\begin{description}
    \item[\sf{Associativity of $\star$}]
    \begin{align*}
        &\mathrel{\phantom{=}} \left( \left([a_1]_{2^{e-2}}, [a_2]_2, [{a_3}]_2\right) \star \left([b_1]_{2^{e-2}}, [b_2]_2, [{b_3}]_2\right) \right) \star \left([c_1]_{2^{e-2}}, [c_2]_2, [{c_3}]_2\right) \\
        &= \left([a_1b_1 +2^{e-3} a_3b_2]_{2^{e-2}}, [a_2+b_2]_2, [a_3+b_3]_2\right) \star \left([c_1]_{2^{e-2}}, [c_2]_2, [{c_3}]_2\right) \\
        &= \left([a_1b_1c_1 +2^{e-3}(a_3b_2c_1+a_3c_2 +b_3c_2)]_{2^{e-2}}, [a_2+b_2+c_2]_2, [a_3+b_3+c_3]_2\right) \\
        &= \left([a_1b_1c_1 +2^{e-3}(a_3b_2+a_3c_2 +a_1b_3c_2)]_{2^{e-2}}, [a_2+b_2+c_2]_2, [a_3+b_3+c_3]_2\right) \\
        &= \left([a_1]_{2^{e-2}}, [a_2]_2, [{a_3}]_2\right) \star \left([b_1c_1+2^{e-3}b_3c_2]_{2^{e-2}}, [b_2+c_2]_2, [b_3+c_3]_2\right) \\
        &= \left([a_1]_{2^{e-2}}, [a_2]_2, [{a_3}]_2\right) \star \left( \left([b_1]_{2^{e-2}}, [b_2]_2, [{b_3}]_2\right) \star \left([c_1]_{2^{e-2}}, [c_2]_2, [{c_3}]_2\right) \right).
    \end{align*}
The third equality holds due to the conditions \(\gcd(c_1, 2) = 1\) and \(\gcd(a_1, 2) = 1\).

    \item[\sf{Identity element}]
    \begin{align*}
        &([1]_{2^{e-2}}, [0]_2, [0]_2) \star \left([a_1]_{2^{e-2}}, [a_2]_2, [{a_3}]_2\right) \\
        &= \left([1\cdot a_1 +2^{e-3}\cdot 0\cdot a_2]_{2^{e-2}}, [0+a_2]_2, [0+a_3]_2\right) \\
        &= \left([a_1]_{2^{e-2}}, [a_2]_2, [{a_3}]_2\right) \\
        &= \left([a_1\cdot 1 +2^{e-3}\cdot a_3\cdot 0]_{2^{e-2}}, [a_2+0]_2, [a_3+0]_2\right) \\
        &= \left([a_1]_{2^{e-2}}, [a_2]_2, [{a_3}]_2\right) \star ([1]_{2^{e-2}}, [0]_2, [0]_2).
    \end{align*}

    \item[\sf{Inverse element}]
    \begin{align*}
        &\mathrel{\phantom{=}} \left([1 + 2^{e-3} a_3 a_2]_{2^{e-2}} [a_1]_{2^{e-2}}^{-1}, [-a_2]_2, [{-a_3}]_2 \right) \star \left([a_1]_{2^{e-2}}, [a_2]_2, [{a_3}]_2\right) \\
        % &= \left([a_1]_{2^{e-2}}, [a_2]_2, [{a_3}]_2\right) \star  \left([1 + 2^{e-3} a_3 a_2]_{2^{e-2}} [a_1]_{2^{e-2}}^{-1}, [-a_2]_2, [{-a_3}]_2 \right)
        &= \left([1 +2^{e-3}a_3a_2 +2^{e-3}(-a_3)a_2]_{2^{e-2}}, [-a_2+a_2]_2, [-a_3+a_3]_2\right) \\
        &= \left([1]_{2^{e-2}}, [0]_2, [0]_2\right) \\
        \shortintertext{and}
        &\mathrel{\phantom{=}} \left([a_1]_{2^{e-2}}, [a_2]_2, [{a_3}]_2\right) \star  \left([1 + 2^{e-3} a_3 a_2]_{2^{e-2}} [a_1]_{2^{e-2}}^{-1}, [-a_2]_2, [{-a_3}]_2 \right)\\
        % &= \left([(1 +2^{e-3}a_3a_2) +2^{e-3}(-a_3)a_2]_{2^{e-2}}, [-a_2+a_2]_2, [-a_3+a_3]_2\right) \\
        &= \left([1 +2^{e-3}a_3a_2 -2^{e-3}a_3a_2]_{2^{e-2}}, [-a_2+a_2]_2, [-a_3+a_3]_2\right) \\
        &= \left([1]_{2^{e-2}}, [0]_2, [0]_2\right). \qedhere
    \end{align*}
\end{description}
\end{proof}
We note that the condition \( e \geq 4 \) is necessary for the above definition to be well-defined.
If \( e < 4 \), then \( 2^{e-3} \) is not even, and consequently, the term \( a_1a_1' + 2^{e-3}a_3a_2' \) in the definition of \( \star \) may fail to be odd, violating the required conditions to be in $U_{2^{e-2}}$.

The following lemma establishes key properties of the group \( (V_e, \star) \).
\begin{lemma} \label{vep}
For $e\geq 4$, in the group \( (V_e, \star) \), for all \( k \in \mathbb{Z} \) and \( a_1 \in \mathbb{Z} \) with \( \gcd(a_1,2) = 1 \), the following properties hold:

\begin{enumerate}
\item
    \begin{itemize}
        \item \( ([a_1]_{2^{e-2}}, [0]_2, [0]_2)^k = ([a_1^k]_{2^{e-2}}, [0]_2, [0]_2) \),
        \item \( ([a_1]_{2^{e-2}}, [0]_2, [1]_2)^k = ([a_1^k]_{2^{e-2}}, [0]_2, [k]_2) \),
        \item \( ([a_1]_{2^{e-2}}, [1]_2, [0]_2)^k = ([a_1^k]_{2^{e-2}}, [k]_2, [0]_2) \).
    \end{itemize}
\item ~\vspace{-\baselineskip}
    \[
    ([a_1]_{2^{e-2}}, [1]_2, [1]_2)^k =
    \begin{cases}
        ([a_1^k]_{2^{e-2}}, [0]_2, [0]_2), & \text{if } [k]_4 =[0]_4, \\
        ([a_1^k]_{2^{e-2}}, [1]_2, [1]_2), & \text{if } [k]_4 =[1]_4, \\
        ([a_1^k + 2^{e-3}]_{2^{e-2}}, [0]_2, [0]_2), & \text{if } [k]_4 =[2]_4, \\
        ([a_1^k + 2^{e-3}]_{2^{e-2}}, [1]_2, [1]_2), & \text{if } [k]_4 =[3]_4.
    \end{cases}
    \]
\item The center of \( V_e \) is given by
    \[
    Z(V_e) = \left\{ \left([a_1]_{2^{e-2}}, [0]_2, [0]_2\right) \middle\mid a_1 \in \mathbb{Z}, \ \gcd(a_1,2) = 1 \right\}.
    \]
    In particular, \( |Z(V_e)| = 2^{e-3} \).
\item The subgroup \( H_1 \) of \( V_e \), generated by \( ([1]_{2^{e-2}}, [1]_2, [1]_2) \) and \( ([1]_{2^{e-2}}, [1]_2, [0]_2) \), is isomorphic to \( D_4 \).
    Moreover
    \begin{enumerate}
        \item when $e=4$, $V_e=H$;
        \item when $e\geq 5$, $H_1 = \left\{ \left([5^{2^{e-5}w}]_{2^{e-2}}, [a_2]_2, [a_3]_2 \right) \middle\mid w \in \{0,1\}, \ a_2, a_3 \in \{0,1\} \right\}$.
    \end{enumerate}
    %Within \( H_1 \), the subgroup  \[K_1 = \left\{ \left([5^{2^{e-5}w}]_{2^{e-2}}, [0]_2, [a_3]_2 \right) \mid w \in \{0,1\}, \ a_3 \in \{0,1\} \right\}\]  is isomorphic to \( \mathbb{Z}/2\mathbb{Z} \times \mathbb{Z}/2\mathbb{Z} \) under addition.
\end{enumerate}
\end{lemma}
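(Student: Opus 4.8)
The plan is to establish the four parts in order, since the power formulas of (1) and (2) feed directly into the centre computation (3) and the structure of $H_1$ in (4). The only feature of $\star$ that requires care is the correction term $2^{e-3} a_3 a_2'$ in the first coordinate; in most computations it simply vanishes. For part (1) I would induct on $k \geq 0$ and then extend to negative $k$ via the inverse formula of Definition-Lemma \ref{def:ve}. In each of the three cases the correction term is identically zero along the induction: for $([a_1]_{2^{e-2}},[0]_2,[0]_2)$ and $([a_1]_{2^{e-2}},[0]_2,[1]_2)$ the right-hand factor always has second coordinate $0$, while for $([a_1]_{2^{e-2}},[1]_2,[0]_2)$ the accumulated left-hand factor always has third coordinate $0$. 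Hence the first coordinate just accumulates to $a_1^k$ and the two $\mathbb{Z}/2\mathbb{Z}$ coordinates add, giving the stated formulas at once.

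Part (2) is where the real work lies, being the only case in which the correction term genuinely contributes. First I would compute the small powers directly for $X = ([a_1]_{2^{e-2}},[1]_2,[1]_2)$, obtaining $X^2 = ([a_1^2 + 2^{e-3}]_{2^{e-2}},[0]_2,[0]_2)$, $X^3 = ([a_1^3 + 2^{e-3}]_{2^{e-2}},[1]_2,[1]_2)$ and $X^4 = ([a_1^4]_{2^{e-2}},[0]_2,[0]_2)$, repeatedly using that $a_1$ is odd so that $2^{e-3} a_1^j \equiv 2^{e-3} \pmod{2^{e-2}}$ and that $a_1 + 1$ is even so that $2^{e-3}(a_1+1) \equiv 0 \pmod{2^{e-2}}$. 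Since $X^4$ has the shape $([\,\cdot\,]_{2^{e-2}},[0]_2,[0]_2)$ it commutes with everything (the correction term vanishes on both sides), so $X^{k+4} = X^4 \star X^k$ merely multiplies the first coordinate of $X^k$ by $a_1^4$; because $a_1^4$ is odd, $a_1^4 \cdot 2^{e-3} \equiv 2^{e-3} \pmod{2^{e-2}}$, so the residue class of $k$ modulo $4$ is preserved. An induction with step $4$ based on the base cases $k = 0,1,2,3$ then yields all four cases, and negative $k$ follow by the same downward step. The main obstacle throughout is exactly this modular bookkeeping of the $2^{e-3}$ term.

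For part (3) I would first observe that every element $([b]_{2^{e-2}},[0]_2,[0]_2)$ is central, since multiplying by it on either side annihilates the correction term and leaves the other factor's $\mathbb{Z}/2\mathbb{Z}$ coordinates fixed. For the reverse containment, take a central $([a_1]_{2^{e-2}},[a_2]_2,[a_3]_2)$ and compare its products with the two test elements $([1]_{2^{e-2}},[1]_2,[0]_2)$ and $([1]_{2^{e-2}},[0]_2,[1]_2)$ taken in both orders: the first comparison forces $2^{e-3} a_3 \equiv 0 \pmod{2^{e-2}}$, i.e. $[a_3]_2 = [0]_2$, and the second forces $[a_2]_2 = [0]_2$. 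The count $|Z(V_e)| = |U_{2^{e-2}}| = 2^{e-3}$ then follows.

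Finally, for part (4) I would set $a = ([1]_{2^{e-2}},[1]_2,[1]_2)$ and $b = ([1]_{2^{e-2}},[1]_2,[0]_2)$. Part (2) gives $a^2 = ([1+2^{e-3}]_{2^{e-2}},[0]_2,[0]_2) \neq 1$ and $a^4 = 1$, so $o(a) = 4$, while part (1) gives $b^2 = 1$; a direct computation yields $b a b^{-1} = b a b = ([1+2^{e-3}]_{2^{e-2}},[1]_2,[1]_2) = a^3 = a^{-1}$. These are precisely the defining relations $r^4 = s^2 = 1$, $srs^{-1} = r^{-1}$ of $D_4$ (with $r = a$, $s = b$), so there is a surjective homomorphism $D_4 \to H_1$. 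Since every element of $\langle a\rangle$ has equal $\mathbb{Z}/2\mathbb{Z}$ coordinates whereas $b$ does not, $b \notin \langle a\rangle$; hence $|H_1| \geq 8$ and the surjection is an isomorphism, so $H_1 \cong D_4$. For (a), $|V_4| = |U_4|\cdot 2 \cdot 2 = 8 = |H_1|$ forces $V_4 = H_1$. For (b), Lemma \ref{Lem5.14} with $w = 0$ gives $[5^{2^{e-5}}]_{2^{e-2}} = [1+2^{e-3}]_{2^{e-2}}$, so enumerating the coset $\langle a\rangle$ together with $b\langle a\rangle$ gives eight elements whose first coordinates are exactly $\{[5^{2^{e-5}w}]_{2^{e-2}} : w \in \{0,1\}\}$ and whose $\mathbb{Z}/2\mathbb{Z}$ coordinates range over all of $\{0,1\}^2$, matching the stated description.
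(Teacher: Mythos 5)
Your proposal is correct and follows essentially the same route as the paper: induction on $k$ for the power formulas (your period-$4$ organization of part (2) via $X^4$ being central is just a cleaner bookkeeping of the same induction), comparison of products in both orders against test elements for the centre, and verification of the dihedral relations followed by explicit enumeration of the eight elements of $H_1$ using Lemma \ref{Lem5.14} for the $e \geq 5$ description. All the modular computations with the $2^{e-3}$ correction term check out.
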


\begin{proof}
\begin{enumerate}
\item[(1), (2)]  These are easily proven using induction on \(k\).
\item[(3)] Let \( a_1, a_1' \in \mathbb{Z} \) with \( \gcd(a_1,2) = \gcd(a_1',2) = 1 \).
It is clear that
\[
\left([a_1]_{2^{e-2}}, [0]_2, [0]_2\right) \in Z(V_e).
\]
For the reverse inclusion, we analyze the following calculations:
\begin{align}
    \label{Eq03}
    ([a_1]_{2^{e-2}}, [0]_2, [1]_2) \star ([a_1']_{2^{e-2}}, [1]_2, [0]_2) &= ([a_1 a_1' + 2^{e-3}]_{2^{e-2}}, [1]_2, [1]_2), \\
    \label{Eq04}
    ([a_1']_{2^{e-2}}, [1]_2, [0]_2) \star ([a_1]_{2^{e-2}}, [0]_2, [1]_2) &= ([a_1 a_1']_{2^{e-2}}, [1]_2, [1]_2), \\
    \label{Eq05}
    ([a_1]_{2^{e-2}}, [1]_2, [1]_2) \star ([a_1']_{2^{e-2}}, [1]_2, [0]_2) &= ([a_1 a_1' + 2^{e-3}]_{2^{e-2}}, [0]_2, [1]_2), \\
    \label{Eq06}
    ([a_1']_{2^{e-2}}, [1]_2, [0]_2) \star ([a_1]_{2^{e-2}}, [1]_2, [1]_2) &= ([a_1 a_1']_{2^{e-2}}, [0]_2, [1]_2).
\end{align}
% Setting \( a_1 = 1 \) (respectively \( a_1' = 1 \)) in Equations (\ref{Eq03})–(\ref{Eq06}), since
% \[
% [a_1' + 2^{e-3}]_{2^{e-2}} \neq [a_1']_{2^{e-2}} \quad \text{ (respectively } [a_1 + 2^{e-3}]_{2^{e-2}} \neq [a_1]_{2^{e-2}}),
% \]
% for all \( [a_1']_{2^{e-2}}\) (respectively \( [a_1]_{2^{e-2}} \)) in \( U_{2^{e-2}} \), the elements \( ([a_1]_{2^{e-2}}, [0]_2, [1]_2) \), \( ([a_1]_{2^{e-2}}, [1]_2, [0]_2) \), and \( ([a_1]_{2^{e-2}}, [1]_2, [1]_2) \) cannot belong to \( Z(V_e) \), completing the proof.
Since $[a_1a_1']_{2^{e-2}} \neq [a_1a_1' +2^{e-3}]_{2^{e-2}}$, comparing Equations (4) and (5) give that the elements \( ([a_1]_{2^{e-2}}, [0]_2, [1]_2) \), and \( ([a_1]_{2^{e-2}}, [1]_2, [0]_2) \) cannot belong to $Z(V_e)$.
Similarly, comparing Equations (6) and (7) give that \( ([a_1]_{2^{e-2}}, [1]_2, [1]_2) \) cannot belong to \( Z(V_e) \), completing the proof.
\item[(4)] We define \( v = ([1]_4, [1]_2, [1]_2) \) and \( u = ([1]_{2^{e-2}}, [1]_2, [0]_2) \).
By (2), \( v \) has order \( 4 \) and \( u \) has order \( 2 \).
Moreover, we have the relation
\[
v u v = u.
\]
This confirms that the subgroup
\[
\langle ([1]_{2^{e-2}}, [1]_2, [1]_2), ([1]_{2^{e-2}}, [1]_2, [0]_2) \rangle
\]
of \( (V_e, \star) \) is isomorphic to \( D_4 \).
In particular, we have
\begin{align*}
    &\mathrel{\phantom{=}} \langle ([1]_{2^{e-2}}, [1]_2,[1]_2),([1]_{2^{e-2}},[1]_2,[0]_2)\rangle \\
    &= \left\lbrace ([1]_{2^{e-2}}, [1]_2,[1]_2)^{i}\star([1]_{2^{e-2}},[1]_2,[0]_2)^{j}\middle| i\in \lbrace 0,1,2,3\rbrace, j\in\lbrace 0,1\rbrace \right\rbrace.
\end{align*}
Furthermore, we have:
\begin{align*}
    &([1]_{2^{e-2}}, [1]_2, [1]_2)^i \star ([1]_{2^{e-2}}, [1]_2, [0]_2)^j \\
    &=
    \begin{cases}
    ([1]_{2^{e-2}}, [j]_2, [0]_2), & \text{if } i=0, \\
    ([1+2^{e-3}j]_{2^{e-2}}, [1+j]_2, [1]_2), & \text{if } i=1, \\
    ([1+2^{e-3}]_{2^{e-2}}, [j]_2, [0]_2), & \text{if } i=2, \\
    ([1+2^{e-3}(j+1)]_{2^{e-2}}, [1+j]_2, [1]_2), & \text{if } i=3.
    \end{cases}
\end{align*}

\begin{enumerate}
    \item When $e=4$, from the computations above, we have $V_e=H$.
    \item When $e\geq 5$, from the computation above and Lemma \ref{Lem5.14}, we deduce that
    \[
    H_1 = \left\{ \left([5^{2^{e-5}w}]_{2^{e-2}}, [a_2]_2, [a_3]_2 \right) \mid w \in \{0,1\}, \ a_2, a_3 \in \{0,1\} \right\}. \qedhere
    \]
\end{enumerate}
%By (1), \( K_1 \) is a subgroup of \( H_1 \) with order \( 4 \). Since every element of \( K_1 \) has order \( 2 \), the group \( (K_1, \star) \) is isomorphic to \( (\mathbb{Z}/2\mathbb{Z} \times \mathbb{Z}/2\mathbb{Z}, +) \).
\end{enumerate}
\end{proof}

The next theorem describes the structure of \(\Aut(U_{2^e}, \cdot)\) for \( e \geq 4 \).
\begin{theorem}\label{Theo2.6}
For \( e \geq 4 \), consider the map
\[
    \begin{array}{cccc}
    \Phi: & (\Aut(U_{2^e}, \cdot), \circ) & \to & (V_e, \star) \\
    & \chi_{2^e, (t_1, t_2, t_3)} & \mapsto & \left([t_3]_{2^{e-2}}, [t_2]_2, \left[t_1/2^{e-3}\right]_2\right),
    \end{array}
\]
where $(t_1, t_2, t_3)\in T_{2^e}$ with
        \[
           \ \ \ \ \ \ T_{2^e} = \left\{ (t_1, t_2, t_3) \middle| t_1 \in \{0, 2^{e-3}\}, t_2 \in \{0, 1\}, t_3 \in \llbracket 0, 2^{e-2}-1 \rrbracket, \gcd(t_3, 2) = 1 \right\},
        \]
(see also Proposition \ref{Prop2.3}).

Then, we have the following results:
\begin{enumerate}
    \item \(\Phi\) is well-defined and a group isomorphism.

    \item When \( e = 4 \), \( (\Aut(U_{2^4}, \cdot), \circ) \) is isomorphic to the dihedral group of order 8.
    More precisely,
    \[
    V_e \cong \left\langle r, s \middle| r s r = s \wedge s^2 = 1 \wedge r^4 = 1 \right\rangle,
    \]
    where the isomorphism sends \( ([1]_4, [1]_2, [1]_2) \) to \( r \) and \( ([1]_4, [0]_2, [1]_2) \) to \( s \).

    \item When \( e \geq 5 \), we have the isomorphism
\[
(\Aut(U_{2^e}, \cdot),\circ) \cong \mathbb{Z}/2\mathbb{Z} \times \left( D_4 \centralprod_{\iota} \mathbb{Z}/2^{e-4}\mathbb{Z} \right),
\]
where the central product is taken with respect to \( \iota = (\iota_1, \iota_2) \) where \( \iota_1 \) and \( \iota_2 \) are the unique injective morphisms from \( \mathbb{Z}/2\mathbb{Z} \) into \( Z(D_4) \) and \( Z(\mathbb{Z}/2^{e-4}\mathbb{Z}) \), respectively.

More explicitly, the group \( V_e \) decomposes as the internal direct product
\[
V_e = \langle ([-1]_{2^{e-2}}, [0]_2, [0]_2) \rangle \times H,
\]
where \( H \) is given by
\[
H = \left\{ \left([5]_{2^{e-2}}^w, [a_2]_2, [a_3]_2 \right) \middle| w \in \llbracket 0, 2^{e-4}-1 \rrbracket, \ a_2, a_3 \in \{0,1\} \right\}.
\]
Furthermore, \( H \) itself is the internal central product of the subgroups
\begin{align*}
    H_1 &= \left\{ \left([5^{2^{e-5}w}]_{2^{e-2}}, [a_2]_2, [a_3]_2 \right) \middle| w \in \{0,1\}, \ a_2, a_3 \in \{0,1\} \right\} \\
    \shortintertext{and}
    H_2 &= \left\{ ([5]_{2^{e-2}}^w, [0]_2, [0]_2) \middle| w \in \llbracket 0, 2^{e-4}-1 \rrbracket \right\}.
\end{align*}
\end{enumerate}
In particular, we have \( \left|\Aut(U_{2^e}, \cdot)\right| = 2^{e-1} \).
\end{theorem}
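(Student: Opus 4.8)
The proof splits according to the three claims, and the organizing idea is that Part (1) reduces everything to the concrete group $(V_e,\star)$ of Definition-Lemma \ref{def:ve}, whose internal anatomy is already supplied by Lemma \ref{vep}. For Part (1), I would first record that by Proposition \ref{Prop2.3}(2)(c) every automorphism of $U_{2^e}$ equals $\chi_{2^e,(t_1,t_2,t_3)}$ for a unique triple in $T_{2^e}$, so $\Phi$ is well-defined on its domain; its image lands in $V_e$ because $\gcd(t_3,2)=1$ forces $[t_3]_{2^{e-2}}\in U_{2^{e-2}}$, while $t_1/2^{e-3}\in\{0,1\}$ and $t_2\in\{0,1\}$. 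Bijectivity is immediate from the evident bijection between $T_{2^e}$ and $V_e$ (both of cardinality $2^{e-1}$). The substance is the homomorphism property, which I would obtain by composing $\chi_{2^e,(t_1,t_2,t_3)}$ with $\chi_{2^e,(t_1',t_2',t_3')}$ and evaluating on the generators $[-1]$ and $[5]$. The decisive simplifications are: since $t_1'\in\{0,2^{e-3}\}$ and $e\geq 4$, the quantity $t_2t_1'$ is even, collapsing the spurious $[-1]$-contribution in $(\chi\circ\chi')([-1])$; since $t_3$ is odd, $t_3t_1'\equiv t_1'\ (\mathrm{mod}\ 2^{e-2})$ and $t_2t_3'\equiv t_2\ (\mathrm{mod}\ 2)$; and $t_1t_2'=2^{e-3}(t_1/2^{e-3})t_2'$ produces precisely the cross term $2^{e-3}a_3a_2'$ of $\star$. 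Matching the three resulting coordinates against the definition of $\star$ closes the computation.

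For Part (2), since $e=4$ gives $V_4=H_1$ by Lemma \ref{vep}(4)(a), I would just check that $r=([1]_4,[1]_2,[1]_2)$ and $s=([1]_4,[0]_2,[1]_2)$ satisfy the dihedral relations: Lemma \ref{vep}(1)--(2) give $r^4=s^2=1$ with $r$ of order $4$, and a single $\star$-computation yields $rsr=s$. Because $\langle r,s\rangle$ already has order $8=|V_4|$, the assignments $r\mapsto r$, $s\mapsto s$ realize the stated presentation, so $V_4\cong D_4$.

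For Part (3), invoking Part (1) I reduce to decomposing $V_e$. First I would establish the internal direct product $V_e=\langle([-1]_{2^{e-2}},[0]_2,[0]_2)\rangle\times H$: the first factor is central of order $2$ by Lemma \ref{vep}(3), $H$ is a subgroup of order $2^{e-2}$, the intersection is trivial since $[-1]_{2^{e-2}}\notin\langle[5]\rangle$ in $U_{2^{e-2}}$, and the orders multiply to $2^{e-1}=|V_e|$. Next I would identify $H$ as the internal central product $H_1\centralprod H_2$: here $H_2=\langle([5]_{2^{e-2}},[0]_2,[0]_2)\rangle\cong\mathbb{Z}/2^{e-4}\mathbb{Z}$ is central in $V_e$ by Lemma \ref{vep}(3), $H_1\cong D_4$ by Lemma \ref{vep}(4), and the crucial point is that $H_1\cap H_2$ equals the order-$2$ subgroup $\{([1]_{2^{e-2}},[0]_2,[0]_2),([5^{2^{e-5}}]_{2^{e-2}},[0]_2,[0]_2)\}$. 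Lemma \ref{Lem5.14} identifies $[5^{2^{e-5}}]_{2^{e-2}}$ with $[1+2^{e-3}]_{2^{e-2}}$ and shows it is simultaneously the central involution $r^2$ of $H_1\cong D_4$ and the unique involution of the cyclic group $H_2$, so the two embeddings $\iota_1,\iota_2$ of $\mathbb{Z}/2\mathbb{Z}$ coincide on this subgroup. The count $|H_1||H_2|/|H_1\cap H_2|=8\cdot 2^{e-4}/2=2^{e-2}=|H|$ then forces $H_1H_2=H$. Assembling these yields $V_e\cong\mathbb{Z}/2\mathbb{Z}\times(D_4\centralprod_{\iota}\mathbb{Z}/2^{e-4}\mathbb{Z})$, and $|\Aut(U_{2^e},\cdot)|=|V_e|=2^{e-1}$ follows at once.

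The main obstacle is the homomorphism verification in Part (1): it is pure modular bookkeeping, but the parity constraints on $t_1,t_1',t_3,t_3'$ must be applied at exactly the right places to make the cross term $2^{e-3}a_3a_2'$ emerge and to suppress the extraneous $[-1]$-exponents. In Part (3) the only delicate step is pinning down $H_1\cap H_2$, where Lemma \ref{Lem5.14} is indispensable for recognizing the common central involution.
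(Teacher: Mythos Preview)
Your proposal is correct and follows essentially the same approach as the paper: Part (1) via evaluation on the generators $[-1]$ and $[5]$ with the same parity simplifications, Part (2) via Lemma \ref{vep}(4), and Part (3) via the same internal direct/central product decomposition of $V_e$ using Lemmas \ref{vep} and \ref{Lem5.14}. The only place where the paper is more explicit is in verifying that the set $H$ is closed under $\star$ (this genuinely needs Lemma \ref{Lem5.14}, since $[5^{w+w'}+2^{e-3}a_3a_2']_{2^{e-2}}$ must be shown to lie in $\langle[5]_{2^{e-2}}\rangle$); you assert this without comment, though your later counting argument $|H_1||H_2|/|H_1\cap H_2|=|H|$ together with $H_1H_2\subseteq H$ would recover it.
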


\begin{proof} ~
\begin{enumerate}
\item Let \( t = (t_1, t_2, t_3) \) and \( t' = (t_1', t_2', t_3') \) be elements of \( T_{2^e} \).
We begin by establishing the well-definedness and injectivity of \( \Phi \).
That is, we need to prove that \( \chi_{2^e, t} = \chi_{2^e, t'} \) if and only if
\[
([t_3]_{2^{e-2}}, [t_2]_2, [t_1 / 2^{e-3}]_2) = ([t_3']_{2^{e-2}}, [t_2']_2, [t_1' / 2^{e-3}]_2).
\]
Since \( \{ [-1], [5] \} \) forms a system of generators for \( U_{2^e} \), \( \chi_{2^e, t} \) and \( \chi_{2^e, t'} \) are equal if and only if they agree on both \( [-1] \) and \( [5] \).
Specifically, the equations
\[
\chi_{2^e, t} ([-1]) = \chi_{2^e, t'} ([-1]) \quad \text{and} \quad \chi_{2^e, t} ([5]) = \chi_{2^e, t'} ([5])
\]
are equivalent to
\[
[-1][5]^{t_1} = [-1][5]^{t_1'} \quad \text{and} \quad [-1]^{t_2}[5]^{t_3} = [-1]^{t_2'}[5]^{t_3'},
\]
which in turn are equivalent to the following congruences:
\[
[t_1/2^{e-3}]_2 = [t_1'/2^{e-3}]_2, \quad [t_2]_2 = [t_2']_2, \quad \text{and} \quad [t_3]_{2^{e-2}} = [t_3']_{2^{e-2}},
\]
since \( t_1, t_1' \in \{ 0, 2^{e-3} \} \).

The surjectivity of \( \Phi \) is straightforward to verify.
To show that \( \Phi \) is a group morphism, consider \( \phi_t \) and \( \phi_{t'} \) in \( \Aut(U_{2^e}, \cdot) \).
% Since \( t_1, t_1' \in \{ 0, 2^{e-3} \} \) and \( \gcd(t_3, 2) = 1 \), \( t_1' \) is even.
% We deduce that \( 2 \mid t_2 t_1' \), and \( [5^{t_3 t_1'}] = [5^{t_1'}] \).
Since $t_1,t_1' \in \{0,2^{e-3}\}$, $t_1$ is even and $[5^{2t_1'}] = [1]$.
Thus since $t_3$ is odd, we deduce that \( [5^{t_3 t_1'}] = [5^{t_1'}] \).
Consequently, we have that:
\begin{align*}
    \chi_{2^e, t} \circ \chi_{2^e, t'} ([-1])
    &= \chi_{2^e, t} ([-1][5]^{t_1'}) \\
    &= [-1]^{1 + t_2 t_1'}[5]^{t_1 + t_3 t_1'} \\
    &= [-1]^1[5]^{t_1 + t_1'}.
\end{align*}

Next, since \( \gcd(t_3', 2) = 1 \) and \( t_1 \in \{ 0, 2^{e-3} \} \), we obtain:
\begin{align*}
    \chi_{2^e, t} \circ \chi_{2^e, t'} ([5])
    &= \chi_{2^e, t} ([-1]^{t_2'}[5]^{t_3'}) \\
    &= [-1]^{t_2' + t_2 t_3'} [5]^{t_1 t_2' + t_3 t_3'} \\
    &= [-1]^{t_2+t_2'} [5]^{t_1 t_2' + t_3 t_3'}.
\end{align*}
Thus, we obtain:
\[
\Phi(\chi_{2^e, t} \circ \chi_{2^e, t'}) = ([t_3 t_3' + t_2' t_1]_{2^{e-2}}, [t_2 + t_2']_2, [(t_1 + t_1') / 2^{e-3}]_2).
\]
and
\[ \Phi(\chi_{2^e, t}) \star \Phi(\chi_{2^e, t'}) = ([t_3]_{2^{e-2}}, [t_2]_2, [t_1 / 2^{e-3}]_2) \star ([t_3']_{2^{e-2}}, [t_2']_2, [t_1' / 2^{e-3}]_2). \]
Thus, by the definition of \( \star \) (see Definition-Lemma \ref{def:ve}), we conclude that:
\[
\Phi(\chi_{2^e, t} \circ \chi_{2^e, t'}) = \Phi(\chi_{2^e, t}) \star \Phi(\chi_{2^e, t'}),
\]
confirming that \( \Phi \) is a group homomorphism.

\item Follows from Lemma \ref{vep} (4). %and \cite[Theorem 1.1]{conrad2009dihedral}.

\item When $e\geq 5$, we prove that \( V_e \) is the internal direct product of \( \langle ([-1]_{2^{e-2}}, [0]_2, [0]_2) \rangle \) and $H$.
We proceed with the following steps:
\begin{itemize}
    \item \textsf{Subgroup Property of \( H \):} We begin by proving that \( H \) is a subgroup of \( V_e \).
    The element \( ([1]_{2^{e-2}}, [0]_2, [0]_2) \) is clearly in \( H \).
    Let \( a = ([5^w]_{2^{e-2}}, [a_2]_2, [a_3]_2) \) and \( a' = ([5^{w'}]_{2^{e-2}}, [a_2']_2, [a_3']_2) \) be elements of \( H \).
    We have:
    \[
    a \star a' = ([5^{w+w'} + 2^{e-3}a_3 a_2']_{2^{e-2}}, [a_2 + a_2']_2, [a_3 + a_3']_2),
    \]
    and the inverse of \( a \) with respect to $\star$ is given by
    \[
    a^{-1} = ([1 + 2^{e-3}a_2 a_3]_{2^{e-2}} [5]_{2^{e-2}}^{-w}, [-a_2]_2, [-a_3]_2).
    \]
    Clearly, when either \( [a_3]_2 \) or \( [a_2']_2 \) is zero, these expressions remain in \( H\).
    Otherwise, the closure under $\star$ and inverses for \( H\) follows from Lemma \ref{Lem5.14}.

    \item \textsf{Normality of \( H \):} To prove that \( H \) is normal in \( V_e \), let \( a = ([5^w]_{2^{e-2}}, [a_2]_2, [a_3]_2) \in H \) and \( b = ([b_1]_{2^{e-2}}, [b_2]_2, [b_3]_2) \in V_e \).
    We have:
    \[
    b \star a \star b^{-1} = ([5^w + 2^{e-3}(b_3 a_2 - a_3 b_2)]_{2^{e-2}}, [a_2]_2, [a_3]_2).
    \]
    This result is clearly in \( H \), as it holds when \( [b_3 a_2 - a_3 b_2]_2 \) is zero, and otherwise by Lemma \ref{Lem5.14}.
    Therefore, \( H \) is a normal subgroup of \( V_e \).

    \item \textsf{Order of \( \langle ([-1]_{2^{e-2}}, [0]_2, [0]_2) \rangle \):} The group \( \langle ([-1]_{2^{e-2}}, [0]_2, [0]_2) \rangle \) has order 2 and is normal in \( V_e \) (see Lemma \ref{vep} (1) and (3)).

    \item \textsf{Intersection:} We have that:
    \[
    H \cap \langle ([-1]_{2^{e-2}}, [0]_2, [0]_2) \rangle = \{ ([1]_{2^{e-2}}, [0]_2, [0]_2) \},
    \]
    since \( U_{2^{e-2}} \) is the direct product of the subgroups generated by \( [5]_{2^{e-2}} \) and \( [-1]_{2^{e-2}} \).
    % since the subgroups of $U_{2^{e-2}}$ generated by \( [5]_{2^{e-2}} \) and \( [-1]_{2^{e-2}} \) have trivial intersection.

    \item \textsf{Internal Direct Product:} Let \( ([a_1]_{2^{e-2}}, [a_2]_2, [a_3]_2) \in V_e \).
    We can express \( [a_1]_{2^{e-2}} \) as \( [-1]^v_{2^{e-2}} [5]^w_{2^{e-2}} \), where \( v \in \{0,1\} \) and \( w \in \llbracket 0, 2^{e-4}-1\rrbracket \).
    Thus, we can write
    \[
    ([a_1]_{2^{e-2}}, [a_2]_2, [a_3]_2) = ([5]^w_{2^{e-2}}, [a_2]_2, [a_3]_2) \star ([-1]^v_{2^{e-2}}, [0]_2, [0]_2),
    \]
    where \( \star \) denotes the binary operation in \( V_e \).
    This concludes the proof that \( V_e \) is the internal direct product of \( H \) and \( \langle ([-1]_{2^{e-2}}, [0]_2, [0]_2) \rangle \).

    \item \textsf{Internal Central Product:} To show that \( H \) is the internal central product of \( H_1 \) and \( H_2 \), consider an element \( ([5^w]_{2^{e-2}}, [a_2]_2, [a_3]_2) \in H \).
    We can express it as
    \[
    ([5^w]_{2^{e-2}}, [a_2]_2, [a_3]_2) = ([5^w]_{2^{e-2}}, [0]_2, [0]_2) \star ([1]_{2^{e-2}}, [a_2]_2, [a_3]_2).
    \]
    This shows that \( H = H_1 H_2 \).

    \item \textsf{Intersection of \( H_1 \) and \( H_2 \):} We have
    \[
    H_1 \cap H_2 = \{ ([5^{2^{e-5}}]_{2^{e-2}}, [0]_2, [0]_2)^i \mid i \in \{0, 1\} \} \subseteq Z(H),
    \]
    by Lemma \ref{vep} (3).

    \item \textsf{Centralization of Subgroups:} By Lemma \ref{vep} (3), \( H_1 \) centralizes \( H_2 \).
\end{itemize}
Thus, \( H \) is the internal central product of \( H_1 \) and \( H_2 \), and the proof is complete by \cite[Theorem 5.3]{zbMATH03724683}.
\qedhere
\end{enumerate}
\end{proof}

\begin{remark}
The software GAP was used to determine the structure of \( (\Aut(U_{2^e}, \cdot), \circ) \) for \( e \geq 5 \).
For small values of \( e \), we input the group \( V_e \) into the software, which enabled us to identify the corresponding group within the classification of finite groups.
Among the various possible representations, we chose the one based on the central product, as it provided a formulation that could be generalized to the broader case.
\end{remark}
From Proposition \ref{Prop2.3} and Proposition \ref{Prop2.7}, we can deduce the following:
\begin{itemize}
    \item When \( p \) is odd, we have
    \[
    (\mathcal{A}_{p^e}, \circ) = (\Aut(U_{p^e}, \cdot), \circ) \cong (U_{p^{e-1}(p-1)}, \cdot); % \cong \left(\mathbb{Z}/\varphi(p^{e-1}(p-1))\mathbb{Z}, +\right);
    \]
    \item When \( e \leq 2 \), we have \( (\mathcal{A}_{2^e}, \circ) = (\{\Id\}, \circ) \);
    \item When \( e = 3 \), we have \( (\mathcal{A}_{8}, \circ) \cong (\mathbb{Z}/2\mathbb{Z}, +) \).
\end{itemize}
The following corollary determines the structure of \( \mathcal{A}_{2^e} \) when \( e \geq 4 \):
\begin{corollary}
When \( e \geq 4 \), we have \[ (\mathcal{A}_{2^e}, \circ) \cong (\mathbb{Z}/2^{e-4}\mathbb{Z} \times \mathbb{Z}/2\mathbb{Z} \times \mathbb{Z}/2\mathbb{Z}, +). \]
In particular, $(\mathcal{A}_{p^e}, \circ)$ is always an abelian group independently of the parity of \(p\).
\end{corollary}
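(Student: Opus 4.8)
The plan is to transport the computation through the isomorphism $\Phi$ of Theorem~\ref{Theo2.6} and to observe that $\mathcal{A}_{2^e}$ lands precisely in the region of $V_e$ on which the twisting term of $\star$ disappears. First I would recall from Proposition~\ref{Prop2.7} that, for $e \geq 4$,
\[
\mathcal{A}_{2^e} = \left\{ \chi_{2^e, (t_1, 0, t_3)} \mid (t_1, 0, t_3) \in T_{2^e} \right\},
\]
that is, exactly the automorphisms $\chi_{2^e, (t_1, t_2, t_3)}$ for which $t_2 = 0$. Since $\Phi$ sends $\chi_{2^e, (t_1, t_2, t_3)}$ to $([t_3]_{2^{e-2}}, [t_2]_2, [t_1/2^{e-3}]_2)$ and is a group isomorphism, its restriction identifies $\mathcal{A}_{2^e}$ with the subgroup
\[
W = \left\{ ([a_1]_{2^{e-2}}, [0]_2, [a_3]_2) \mid a_1 \in \mathbb{Z},\ \gcd(a_1,2)=1,\ a_3 \in \{0,1\} \right\}
\]
of $V_e$; as $t_3$ runs over the odd classes in $\llbracket 0, 2^{e-2}-1 \rrbracket$ and $t_1$ over $\{0, 2^{e-3}\}$, the image is exactly $W$.

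The crux of the argument is that on $W$ the operation $\star$ is componentwise. Indeed, for two elements of $W$ the middle coordinates are both $[0]_2$, so the twisting summand $2^{e-3} a_3 a_2'$ in the definition of $\star$ (Definition-Lemma~\ref{def:ve}) vanishes, yielding
\[
([a_1]_{2^{e-2}}, [0]_2, [a_3]_2) \star ([a_1']_{2^{e-2}}, [0]_2, [a_3']_2) = ([a_1 a_1']_{2^{e-2}}, [0]_2, [a_3 + a_3']_2).
\]
This simultaneously shows that $W$ is closed under $\star$ and that $W \cong (U_{2^{e-2}}, \cdot) \times (\mathbb{Z}/2\mathbb{Z}, +)$.

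It then remains to identify $U_{2^{e-2}}$. Using the structure of the unit group recalled at the start of Section~2, for $e \geq 5$ (so that $e-2 \geq 3$) one has $U_{2^{e-2}} \cong \mathbb{Z}/2^{e-4}\mathbb{Z} \times \mathbb{Z}/2\mathbb{Z}$, with $[5]$ generating the cyclic factor of order $2^{e-4}$ and $[-1]$ the factor of order $2$. Combining this with the previous step gives $\mathcal{A}_{2^e} \cong W \cong \mathbb{Z}/2^{e-4}\mathbb{Z} \times \mathbb{Z}/2\mathbb{Z} \times \mathbb{Z}/2\mathbb{Z}$. The only delicate point is the boundary case $e = 4$, where $2^{e-4} = 1$: here $U_{2^{e-2}} = U_4 \cong \mathbb{Z}/2\mathbb{Z}$, which still matches the stated formula under the convention that $\mathbb{Z}/2^0\mathbb{Z}$ is trivial, so no separate argument is needed. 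I expect this degenerate case to be the only place requiring care, the rest being a direct transport of structure.

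Finally, the abelianness of $(\mathcal{A}_{p^e}, \circ)$ for every $p$ follows by assembling this result with the cases listed immediately before the corollary: for odd $p$ we have $\mathcal{A}_{p^e} \cong U_{p^{e-1}(p-1)}$, which is cyclic; for $p = 2$ and $e \leq 2$ it is trivial; for $e = 3$ it is $\mathbb{Z}/2\mathbb{Z}$; and for $e \geq 4$ it is the direct product of abelian groups just computed. In all cases $\mathcal{A}_{p^e}$ is abelian, which establishes the final assertion.
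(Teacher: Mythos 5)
Your proposal is correct and follows essentially the same route as the paper: both identify $\mathcal{A}_{2^e}$ via $\Phi$ with the subgroup of $V_e$ having middle coordinate $[0]_2$, observe that $\star$ becomes componentwise there, and conclude using $U_{2^{e-2}} \cong \mathbb{Z}/2^{e-4}\mathbb{Z} \times \mathbb{Z}/2\mathbb{Z}$. Your explicit check of the boundary case $e=4$ and the final assembly of cases for the abelianness claim are slightly more detailed than the paper's proof, but the argument is the same.
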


\begin{proof}
% \begin{itemize}
When $e\geq 4$, by Theorem \ref{Theo2.6} and Proposition \ref{Prop2.7}, we know that \( \mathcal{A}_{2^e} \) is isomorphic to the subgroup \( W_e = \left\{([a_1]_{2^{e-2}}, [0]_2, [a_3]_2) \middle| [a_1]_{2^{e-2}} \in U_{2^{e-2}},[a_3]_2 \in \mathbb{Z}/2\mathbb{Z} \right\} \) of \( V_e \).

For any \( a_1, a_3, a_1', a_3' \in \mathbb{Z} \) with \( \gcd(a_1, 2) = \gcd(a_1', 2) = 1 \), we have
\[
\left([a_1]_{2^{e-2}}, [0]_2, [a_3]_2\right) \star \left([a_1']_{2^{e-2}}, [0]_2, [a_3']_2\right) = \left([a_1 a_1']_{2^{e-2}}, [0]_2, [a_3 + a_3']_2\right).
\]
From this and that \( (U_{2^{e-2}}, \cdot)\cong (\mathbb{Z}/2^{e-4}\mathbb{Z} \times \mathbb{Z}/2\mathbb{Z}, +) \), we conclude that \[ (\mathcal{A}_{2^e}, \circ) \cong (\mathbb{Z}/2^{e-4}\mathbb{Z} \times \mathbb{Z}/2\mathbb{Z} \times \mathbb{Z}/2\mathbb{Z}, +). \qedhere \]
\end{proof}
\begin{remark}\label{Rem2.15}
We denote by \(\Aut_{[-1]}(U_{2^e}, \cdot)\) (respectively \(\Aut_{[5]}(U_{2^e}, \cdot)\)) the automorphism group of \(U_{2^e}\) that fixes \([-1]\) (respectively \([5]\)).
We note that any automorphism fixing \([-1]\) (respectively \([5]\)) must also fix the corresponding groups \(\langle [-1] \rangle\) (respectively \(\langle [5] \rangle\)) pointwise.
Via the isomorphism $\Phi$ of Theorem \ref{Theo2.6} (1), we have:
\[
\Aut_{[-1]}(U_{2^e}, \cdot) = \left\{\chi_{2^e,(0,t_2,t_3)} \middle\mid ([t_3]_{2^{e-2}}, [t_2]_2) \in U_{2^{e-2}} \times \mathbb{Z}/2\mathbb{Z} \right\} \cong U_{2^{e-2}} \times \mathbb{Z}/2\mathbb{Z},
\]
and
\[
\Aut_{[5]}(U_{2^e}, \cdot) = \left\{ \chi_{2^e,(t_1, 0, 1)} \middle\mid \left[t_1/2^{e-3} \right]_2 \in \mathbb{Z}/2\mathbb{Z} \right\} \cong \mathbb{Z}/2\mathbb{Z}.
\]
In particular, we have that
\begin{align*}
    \Aut(U_{2^e}, \cdot) &= \left\{ \phi_{[-1]} \circ \phi_{[5]} \middle\mid \phi_{[-1]} \in \Aut_{[-1]}(U_{2^e}, \cdot), \phi_{[5]} \in \Aut_{[5]}(U_{2^e}, \cdot) \right\} \\
    &= \Aut_{[-1]}(U_{2^e}, \cdot) \circ \Aut_{[5]}(U_{2^e}, \cdot)
\end{align*}
and that \(\Aut(U_{2^e}, \cdot)\) is the internal semidirect product of \(\Aut_{[-1]}(U_{2^e}, \cdot)\) and \(\Aut_{[5]}(U_{2^e}, \cdot)\).
\end{remark}

\section{Characterizing the group of integers modulo \texorpdfstring{$p^e$}{}}
\subsection{Basic results about the monoid \texorpdfstring{$\mathbb{Z}/p^e\mathbb{Z}$}{}}
In the following, we adopt the convention that $s^0$ is defined as $[1]$ for any $s \in \mathbb{Z}/p^e\mathbb{Z}$, even when $s = [0]$.
Furthermore, we note that any element in $\mathbb{Z}/p^e\mathbb{Z}$ can be expressed as $[p]^u[r]$, where $\gcd(p,r) = 1$.
The following lemma provides a characterization of the conditions under which two elements in $\mathbb{Z}/p^e\mathbb{Z}$ are equal.

\begin{lemma}\label{Lem3.1}
Given \(u, u' \in \mathbb{N}\) and \(r, r' \in \mathbb{Z}\) with \(\gcd(p, r) = \gcd(p, r') = 1\), we have that \([p]^u[r] = [p]^{u'}[r']\) if and only if one of the following conditions holds:
\begin{enumerate}
    \item \(u, u' \geq e\), or
    \item \(u = u'\) and \([r]_{p^{e-u}}= [  r']_{p^{e-u}}\).
\end{enumerate}
\end{lemma}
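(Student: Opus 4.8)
The plan is to prove the characterization by comparing the $p$-adic valuations and the prime-to-$p$ parts of the two elements $[p]^u[r]$ and $[p]^{u'}[r']$ in the ring $\mathbb{Z}/p^e\mathbb{Z}$. The backward direction is the easy one: if $u, u' \geq e$, then $[p]^u = [p]^{u'} = [0]$ (since $p^e \mid p^u$), so both products equal $[0]$; and if $u = u'$ with $[r]_{p^{e-u}} = [r']_{p^{e-u}}$, then $p^{e-u} \mid r - r'$, hence $p^e \mid p^u(r - r')$, giving $[p]^u[r] = [p]^u[r'] = [p]^{u'}[r']$. Both cases should take only a couple of lines.

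For the forward direction, I would start from the hypothesis $[p]^u[r] = [p]^{u'}[r']$, i.e. $p^e \mid p^u r - p^{u'} r'$, and split on whether the common value is $[0]$. First I would handle the degenerate case: if $u \geq e$ then $[p]^u = [0]$, so the product is $[0]$, which forces $p^e \mid p^{u'} r'$; since $\gcd(p, r') = 1$ this means $p^e \mid p^{u'}$, i.e. $u' \geq e$, landing us in condition (1) (and symmetrically if $u' \geq e$). Having disposed of this, I may assume $u, u' < e$, so both $p^u r$ and $p^{u'} r'$ are nonzero modulo $p^e$. Here the key observation is that, since $r$ and $r'$ are coprime to $p$, we have $p^u \mid\mid p^u r$ and $p^{u'} \mid\mid p^{u'} r'$, so $u$ and $u'$ are exactly the $p$-adic valuations of representatives of the two (equal) elements. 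Reducing the equation $p^e \mid p^u r - p^{u'} r'$ modulo successive powers of $p$, or equivalently comparing valuations directly, forces $u = u'$: if say $u < u'$, then $p^u \mid\mid p^u r - p^{u'} r'$ while $p^e \mid p^u r - p^{u'} r'$ would require $e \leq u < e$, a contradiction. Once $u = u'$ is established, the divisibility $p^e \mid p^u(r - r')$ cancels to $p^{e-u} \mid r - r'$, which is exactly $[r]_{p^{e-u}} = [r']_{p^{e-u}}$, i.e. condition (2).

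The step I expect to be the main (though modest) obstacle is the valuation argument forcing $u = u'$ in the non-degenerate case, since it requires care about which representative one is taking and the fact that the notation $[p]^u[r]$ is defined via the convention $s^0 = [1]$; one must be sure that the valuation of the \emph{element} (as opposed to an arbitrary integer representative) is well-defined, which is precisely where $\gcd(p, r) = 1$ and $u < e$ are used. I would phrase this cleanly using the $a^n \mid\mid b$ notation introduced in the preliminaries, writing $p^u \mid\mid p^u r$ and $p^{u'} \mid\mid p^{u'} r'$ and arguing by symmetry that $u < u'$ (and $u' < u$) leads to a contradiction with $p^e$ dividing the difference. Everything else reduces to elementary divisibility manipulations that I would keep brief.
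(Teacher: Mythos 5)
Your proof is correct and follows essentially the same route as the paper's: an easy verification of the backward direction, and a valuation-style contradiction forcing $u = u'$ in the nondegenerate case, after which cancellation gives the congruence modulo $p^{e-u}$. The only cosmetic difference is that the paper derives the contradiction by multiplying both sides by $[p]^{e-u}$ inside $\mathbb{Z}/p^e\mathbb{Z}$ (so that one side becomes $[0]$ while the other visibly does not), whereas you lift to $\mathbb{Z}$ and compare $p$-adic valuations of the difference; you also treat the case where exactly one of $u,u'$ is at least $e$ more explicitly than the paper does.
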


\begin{proof}
The ``only if'' direction is clear.
For the ``if'' direction, since the implication is clear when $(1)$ is satisfied, we suppose that \([p]^u[r] = [p]^{u'}[r']\) and that \(u, u' \in \llbracket 0, e-1 \rrbracket\).

Assume, for contradiction, that \(u > u'\).
Thus, \(0 \leq e - u < e - u' \leq e\).
Multiplying both sides of the equation \([p]^u[r] = [p]^{u'}[r']\) by \([p]^{e - u}\), we obtain
\[
    [0] = [p]^e [r] = [p]^{u' - u + e} [r'].
\]
However, this leads to a contradiction because \([p]^{u' - u + e} [r'] \neq [0]\).
Therefore, \(u = u'\).

For \(u = u'\), the equation \([p]^u[r] = [p]^{u'}[r']\) is equivalent to \([ p^u r]=[  p^u r']\), which is equivalent to \([r]_{p^{e-u}} =[  r']_{p^{e-u}}\).
\end{proof}

\begin{lemma}\label{lem:ZpeZ_generators}
The cardinality of the minimal set of generators for $\mathbb{Z}/p^e\mathbb{Z}$ is $2$ when $p$ is odd and $3$ when $p = 2$.
More precisely, any set of generators for $\mathbb{Z}/p^e\mathbb{Z}$ of minimal cardinality is of the form $\lbrace [p]a\rbrace \cup G$ where
\begin{itemize}
    \item $a \in U_{p^e}$ and
    \item $G$ is a system of generators for $U_{p^e}$ with minimal cardinality.
\end{itemize}
\end{lemma}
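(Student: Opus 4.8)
The plan is to exploit the $p$-adic valuation on $\mathbb{Z}/p^e\mathbb{Z}$. Writing every nonzero element uniquely (via Lemma \ref{Lem3.1}) as $[p]^u[r]$ with $\gcd(p,r)=1$ and $u \in \llbracket 0, e-1\rrbracket$, set $v([p]^u[r]) = u$ and $v([0]) = e$. The crucial elementary fact is that valuation is additive under multiplication up to the cap at $e$, so that a product of elements of the monoid lies in $U_{p^e}$ if and only if every factor does (their valuations must sum to $0$). First I would record this, together with its immediate consequence: if $S$ generates $\mathbb{Z}/p^e\mathbb{Z}$, then the units lying in $S$ already generate the group $U_{p^e}$, since any unit expressed as a product of elements of $S$ can only involve unit factors. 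Hence $|S \cap U_{p^e}| \geq n$, where $n$ denotes the minimal number of generators of $U_{p^e}$ (namely $n = 1$ for $p$ odd, and $n = 2$ for $p = 2$ with $e \geq 3$ by Lemma \ref{Lem2.3}).

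Next I would show $S$ must contain at least one non-unit. For $e \geq 2$ the element $[p]$ has valuation $1$, and writing it as a product of generators forces the valuations of the factors to sum to exactly $1$ (the cap at $e \geq 2$ cannot intervene), so some generator has valuation exactly $1$, i.e.\ is of the form $[p]a$ with $a \in U_{p^e}$; for $e = 1$ the absorbing element $[0]$ cannot be a product of units and so must itself belong to $S$, and $[0] = [p][1]$. In either case this non-unit is distinct from the units of $S$, giving the lower bound $|S| \geq n + 1$.

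For the reverse inequality and the classification, I would check that $\{[p]a\} \cup G$ generates the monoid whenever $G$ generates $U_{p^e}$ and $a \in U_{p^e}$. Indeed $G$ yields every unit, and for each $j$ the products $([p]a)^j \cdot u = [p]^j a^j u$ with $u$ ranging over all units give, by Lemma \ref{Lem3.1}, every element of valuation $j$ (since $a^j u$ then ranges over all of $U_{p^e}$), while $([p]a)^e = [0]$; thus every element is obtained. This produces a generating set of size $n+1$, so the minimal cardinality equals $n+1$. Finally, if $S$ is any minimal generating set then $|S| = n+1$, and combining $|S \cap U_{p^e}| \geq n$ with the existence of a non-unit forces $|S \cap U_{p^e}| = n$ and exactly one non-unit $m$; then $G := S \cap U_{p^e}$ is a minimal generating set of $U_{p^e}$, and $m$, being the unique non-unit, is precisely the valuation-$1$ element produced above, i.e.\ $m = [p]a$ for some $a \in U_{p^e}$. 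This is exactly the asserted form.

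I expect the main obstacle to be the generation (reverse) direction: one must verify carefully that the powers $([p]a)^j$, multiplied by the units, sweep out each valuation stratum completely, which is where Lemma \ref{Lem3.1} is essential, since it identifies when two expressions $[p]^j[r]$ and $[p]^j[r']$ coincide (namely when $[r]_{p^{e-j}} = [r']_{p^{e-j}}$), so that the units mod $p^e$ surject onto the relevant residues mod $p^{e-j}$. The small case $e = 1$, where $U_{p^e}$ may be trivial and the needed non-unit is $[0]$ rather than a genuine valuation-$1$ element, requires separate bookkeeping, and the explicit counts $2$ (for $p$ odd) and $3$ (for $p=2$) then follow by substituting the value of $n$ from Lemma \ref{Lem2.3}.
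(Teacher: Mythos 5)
Your proof is correct and follows essentially the same route as the paper's: both split $\mathbb{Z}/p^e\mathbb{Z}$ into units and non-units, observe that the unit elements of any generating set must already generate $U_{p^e}$ while at least one non-unit is required, and then pin down that non-unit as having $p$-adic valuation exactly $1$ by expressing $[p]$ as a product of generators. Your write-up is in fact somewhat more complete than the paper's, since you verify the sufficiency direction (that $\{[p]a\}\cup G$ really generates, stratum by stratum via Lemma \ref{Lem3.1}) and handle the $e=1$ edge case, both of which the paper compresses into ``the rest of the proof follows easily from here.''
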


\begin{proof}
Elements within $\mathbb{Z}/p^e\mathbb{Z}$ can be classified into two categories: invertible or non-invertible.
% Invertible elements only generate other invertible elements, while non-invertible elements are confined to generating non-invertible elements.
Products of invertible elements are invertible, while any product including a non-invertible element is non-invertible.
In particular, invertible elements do not include any powers of $[p]$ in their factorization as per Lemma \ref{Lem3.1}, while non-invertible elements are expressed using both non-trivial powers of $[p]$ and invertible elements.
Consequently, any multiplicative generating set for $\mathbb{Z}/p^e\mathbb{Z}$ must include, at the very least, the multiplicative generators for invertible elements and at least one non-invertible element.
Therefore, a system of generators of $\mathbb{Z}/p^e\mathbb{Z}$ with minimal cardinality is of the form $\lbrace [p]^ua \rbrace \cup G$ where $a\in U_{p^e}$, $u\in \mathbb{N}$ and $G$ is a system of generators for $U_{p^e}$ with minimal cardinality.

Let us suppose we have such a system.
Then there exists $x \in \mathbb{N}$ and $r \in U_{p^e}$ such that
$$[p] = ([p]^ua)^x r = [p]^{ux}a^xr.$$
This implies that $ux=1$ by Lemma \ref{Lem3.1} and thus $u=1$ since $u,x \in \mathbb{N}$.
The rest of the proof follows easily from here.
\end{proof}

\subsection{The structure of the automorphism group of \texorpdfstring{$\mathbb{Z}/p^e\mathbb{Z}$}{}}
We begin this section by characterizing the elements of \( \Aut(\mathbb{Z}/p^e\mathbb{Z},\cdot) \).
\begin{proposition} \label{prop:AutZpeZ}
The automorphism group of \( (\mathbb{Z}/p^e \mathbb{Z}, \cdot) \) is given by
\[
\Aut(\mathbb{Z}/p^e \mathbb{Z}, \cdot) = \left\lbrace \psi_{r, \phi} \mid r \in \mathbb{Z}, \gcd(r,p) = 1, \phi \in \mathcal{A}_{p^e} \right\rbrace,
\]
where for any \( r \in \mathbb{Z} \) with \( \gcd(r,p) = 1 \) and \( \phi \in \mathcal{A}_{p^e} \), the automorphism \( \psi_{r, \phi} \) is given by
\[
\psi_{r, \phi}([p]^u b) \coloneq [pr]^u \phi(b) \quad \text{for any} \ u \in \llbracket 0, e \rrbracket, \, b \in U_{p^e}.
\]
In particular, \( \mathcal{A}_{p^e} \) is precisely the set of automorphisms $(U_{p^e}, \cdot)$ that can be lifted to automorphism of \( (\mathbb{Z}/p^e \mathbb{Z}, \cdot) \).
\end{proposition}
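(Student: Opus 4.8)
The plan is to prove this proposition in two directions: first show that every $\psi_{r,\phi}$ is a well-defined automorphism of the monoid, and then show that every monoid automorphism arises this way. The key structural fact driving everything is that any element of $\mathbb{Z}/p^e\mathbb{Z}$ factors as $[p]^u b$ with $b \in U_{p^e}$ and $u \in \llbracket 0, e\rrbracket$ (with the convention that when $u \geq e$ the element is $[0]$), and that Lemma~\ref{Lem3.1} tells us exactly when two such representations coincide. So the main obstacle throughout will be \emph{well-definedness}: the representation $[p]^u b$ is not unique, so I must check that $\psi_{r,\phi}$ does not depend on the chosen factorization, and this is precisely where the hypothesis $\phi \in \mathcal{A}_{p^e}$ (rather than merely $\phi \in \Aut(U_{p^e},\cdot)$) is essential.

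\begin{proof}[Proof sketch]
First I would verify that each $\psi_{r,\phi}$ is a well-defined map. Suppose $[p]^u b = [p]^{u'} b'$ with $b, b' \in U_{p^e}$. By Lemma~\ref{Lem3.1}, either both $u, u' \geq e$ (in which case both sides are $[0]$ and $\psi_{r,\phi}$ sends them to $[0]$ since $[pr]^u = [0]$ for $u \geq e$), or $u = u'$ and $[b]_{p^{e-u}} = [b']_{p^{e-u}}$. In the latter case I must show $[pr]^u \phi(b) = [pr]^u \phi(b')$, equivalently (again by Lemma~\ref{Lem3.1}, since $r$ is coprime to $p$ so $pr$ has the same $p$-valuation as $p$) that $[\phi(b)]_{p^{e-u}} = [\phi(b')]_{p^{e-u}}$. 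This is exactly the condition that $\phi$ induces an automorphism of $U_{p^{e-u}}$, i.e.\ that $\phi \in \mathcal{A}_{p^e}$ via Definition-Lemma~\ref{Deflem2.5}, taking $f = e - u$. This is the crux of the proof and the place where the definition of $\mathcal{A}_{p^e}$ pays off.

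Next I would check that $\psi_{r,\phi}$ is a monoid homomorphism. It fixes $[1] = [p]^0 \cdot [1]$ since $\phi([1]) = [1]$, and for multiplicativity I would write two elements as $[p]^u b$ and $[p]^{u'} b'$, so their product is $[p]^{u+u'}(bb')$; applying $\psi_{r,\phi}$ gives $[pr]^{u+u'}\phi(bb') = [pr]^u[pr]^{u'}\phi(b)\phi(b')$ using that $\phi$ is a group homomorphism, which equals $\psi_{r,\phi}([p]^u b)\,\psi_{r,\phi}([p]^{u'}b')$. (One should be mildly careful about the case $u + u' \geq e$, but then both products land in $[0]$.) For bijectivity, since $\mathbb{Z}/p^e\mathbb{Z}$ is finite it suffices to exhibit a two-sided inverse, and the natural candidate is $\psi_{r^{-1}, \phi^{-1}}$ where $r^{-1}$ is an inverse of $r$ modulo $p^e$ and $\phi^{-1} \in \mathcal{A}_{p^e}$ by the subgroup property; composing the two formulas recovers the identity on each $[p]^u b$.

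Finally, for the reverse inclusion, let $\psi \in \Aut(\mathbb{Z}/p^e\mathbb{Z}, \cdot)$ be arbitrary. By Lemma~\ref{Lem1.2}(1), $\psi$ restricts to a group automorphism $\phi := \tilde\psi$ of the unit group $U_{p^e} = (\mathbb{Z}/p^e\mathbb{Z})^\times$. By Lemma~\ref{lem:ZpeZ_generators}, a minimal generating set of the monoid has the form $\{[p]a\} \cup G$ with $a \in U_{p^e}$ and $G$ a minimal generating set of $U_{p^e}$; since $\psi$ must send minimal generating sets to minimal generating sets, $\psi([p])$ is a non-invertible element of the form $[p]c$ for some $c \in U_{p^e}$, i.e.\ $\psi([p]) = [pr]$ for a suitable $r$ coprime to $p$ (absorbing $c$ into $r$). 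Then for any $[p]^u b$ I compute $\psi([p]^u b) = \psi([p])^u \phi(b) = [pr]^u \phi(b)$, so $\psi = \psi_{r,\phi}$. It remains to argue $\phi \in \mathcal{A}_{p^e}$, which follows because $\psi$ being well-defined on $\mathbb{Z}/p^e\mathbb{Z}$ forces, for each $u$, the implication $[b]_{p^{e-u}} = [b']_{p^{e-u}} \Rightarrow [\phi(b)]_{p^{e-u}} = [\phi(b')]_{p^{e-u}}$ and its converse (using $\psi^{-1}$), which is exactly the defining condition of $\mathcal{A}_{p^e}$ with $f$ ranging over $\llbracket 0, e\rrbracket$. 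This establishes both that $\psi$ has the claimed form and the final assertion that $\mathcal{A}_{p^e}$ is precisely the set of unit-group automorphisms that lift to the monoid.
\end{proof}
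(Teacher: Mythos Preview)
Your argument is correct and follows essentially the same route as the paper: both use Lemma~\ref{Lem3.1} to reduce well-definedness and injectivity of the candidate map to the congruence condition defining $\mathcal{A}_{p^e}$, and both use Lemma~\ref{lem:ZpeZ_generators} (together with Lemma~\ref{Lem1.2}) to force $\psi([p])=[pr]$ for an arbitrary automorphism. The only slip is your claimed inverse: $\psi_{r^{-1},\phi^{-1}}$ is not in general a two-sided inverse of $\psi_{r,\phi}$, since $\psi_{r^{-1},\phi^{-1}}\bigl(\psi_{r,\phi}([p])\bigr)=[pr^{-1}]\,\phi^{-1}([r])$, which equals $[p]$ only when $\phi^{-1}([r])=[r]$. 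The correct inverse is $\psi_{s,\phi^{-1}}$ with $[s]=\phi^{-1}([r])^{-1}$ (compare the inverse in the semidirect product of Theorem~\ref{Theo3.3}). This does not damage your proof, since bijectivity already follows from injectivity on a finite set, which is exactly how the paper handles it.
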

\begin{proof}
Let \(\psi\) be an automorphism of \((\mathbb{Z}/p^e \mathbb{Z}, \cdot)\), and let \( c \in \mathbb{Z}/p^e \mathbb{Z} \).
We can express \( c \) as \( [p]^u [r] \), where \( u \in \llbracket 0, e \rrbracket \) and \( r \in \mathbb{Z} \) with \(\gcd(r, p) = 1\).
Since \( \psi \) is a homomorphism of \((\mathbb{Z}/p^e \mathbb{Z}, \cdot)\), we obtain:
\[
\psi(c) = \psi([p]^u [r]) = \psi([p])^u \tilde{\psi}([r]),
\]
where \( \tilde{\psi} \) is the automorphism of \( U_{p^e} \) induced by \( \psi \), as defined in Lemma \ref{Lem1.2}.

Moreover, by Lemma \ref{lem:ZpeZ_generators}, since \( \psi \) must map any system of generators to another system of generators and a zero divisor to a zero divisor, it follows that
\[
\psi([p]) = [pt], \quad \text{for some } [t] \in U_{p^e}.
\]
Now, consider a map \( \psi \) defined by
\[
\psi([p]^u [r]) = [pt]^u \tilde{\psi}([r])
\]
for all \( [p]^u [r] \in \mathbb{Z}/p^e\mathbb{Z} \), where \( \tilde{\psi} \in \Aut(U_{p^e}, \cdot) \) and $[t] \in U_{p^e}$.
We show that such a \(\psi\) is well-defined and injective if and only if \(\tilde{\psi} \in \mathcal{A}_{p^e}\), by examining the following sequence of equivalences:
\begin{align*}
    ([pt])^u \tilde{\psi}([r]) = ([pt])^{u'} \tilde{\psi}([r']), &\quad \forall u, u' \in \llbracket 0, e-1 \rrbracket, [r], [r'] \in U_{p^e} \\
    \iff u = u' \quad \text{and} \quad [t^u s]_{p^{e-u}} = [t^u s']_{p^{e-u}}, &\quad \forall u, u' \in \llbracket 0, e-1 \rrbracket, [r], [r'] \in U_{p^e} & \text{by Lemma \ref{Lem3.1}} \\
    % \shortintertext{where \(s\) and \(s'\) are integers such that \([s] = \tilde{\psi}([r])\) and \([s'] = \tilde{\psi}([r'])\), by Lemma \ref{Lem3.1}}
    \iff u = u' \quad \text{and} \quad [s]_{p^{e-u}} = [s']_{p^{e-u}}, &\quad \forall u, u' \in \llbracket 0, e-1 \rrbracket, [r], [r'] \in U_{p^e},
\end{align*}
where \(s\) and \(s'\) are integers such that \([s] = \tilde{\psi}([r])\) and \([s'] = \tilde{\psi}([r'])\), since \(\gcd(t, p) = 1\).
The latter assertion is equivalent to the following, if and only if \(\tilde{\psi} \in \mathcal{A}_{p^e}\), by Proposition \ref{Prop2.7}:
\begin{align*}
    u = u' \quad \text{and} \quad [r]_{p^{e-u}} = [r']_{p^{e-u}}, &\quad \forall u, u' \in \llbracket 0, e-1 \rrbracket, [r], [r'] \in U_{p^e} \\
    \iff [p]^u [r] = [p]^{u'} [r'], &\quad \forall u, u' \in \llbracket 0, e-1 \rrbracket, [r], [r'] \in U_{p^e}, \quad \text{by Lemma \ref{Lem3.1}}.
\end{align*}
This concludes the proof.
\end{proof}

Next, we define a group structure that we will later show is isomorphic to \((\Aut(\mathbb{Z}/p^e \mathbb{Z}, \cdot),\circ)\):
\begin{definitionlemma}
We define the \textsf{group \( (U_{p^{e-1}} \rtimes \mathcal{A}_{p^e}, \smallstar) \)} as the semidirect product of \( U_{p^{e-1}} \) and \( \mathcal{A}_{p^e} \).
This group is given by the set-theoretic product \( U_{p^{e-1}} \times \mathcal{A}_{p^e} \) equipped with the binary operation \( \smallstar \) defined by
\[
([r]_{p^{e-1}},\phi) \smallstar ([r']_{p^{e-1}},\phi') = ([r]_{p^{e-1}} \phi_{e-1} ([r']_{p^{e-1}}),\phi\circ\phi'),
\]
for all \( [r]_{p^{e-1}},[r']_{p^{e-1}} \in U_{p^{e-1}} \) and \( \phi, \phi' \in \mathcal{A}_{p^e} \).
\end{definitionlemma}

% (Recall that $\phi_{e-1}$ in the above is the restriction of $\phi$ to $U_{p^{e-1}}$.)
\begin{proof} Let $([r]_{p^{e-1}},\phi), ([r']_{p^{e-1}},\phi'), ([r'']_{p^{e-1}},\phi'')$ be elements of $U_{p^{e-1}} \rtimes \mathcal{A}_{p^e}$.
\begin{description}
\item[\sf{Associativity of $\smallstar$}]
\begin{align*}
   &\mathrel{\phantom{=}} \left(([r]_{p^{e-1}},\phi) \smallstar ([r']_{p^{e-1}},\phi')\right) \smallstar ([r'']_{p^{e-1}},\phi'')\\
   &= ([r]_{p^{e-1}}\phi_{e-1}([r']_{p^{e-1}}),\phi\circ\phi') \smallstar ([r'']_{p^{e-1}},\phi'') \\
    &= ([r]_{p^{e-1}}\phi_{e-1}([r']_{p^{e-1}})(\phi\circ\phi')_{e-1}([r'']_{p^{e-1}}), \phi \circ \phi' \circ \phi'') \\
    &=  ([r]_{p^{e-1}}\phi_{e-1}([r']_{p^{e-1}})\phi_{e-1}(\phi'_{e-1}([r'']_{p^{e-1}})), \phi \circ ( \phi' \circ \phi'') ), \quad \text{by Corollary \ref{theta}}\\
    &=  ([r]_{p^{e-1}}\phi_{e-1}([r']_{p^{e-1}}\phi'_{e-1}([r'']_{p^{e-1}})), \phi \circ ( \phi' \circ \phi'') ), \quad \text{by Definition-Lemma \ref{Deflem2.5}}\\
    &= ([r]_{p^{e-1}},\phi)\smallstar ([r']_{p^{e-1}}\phi'_{e-1}([r'']_{p^{e-1}}), \phi' \circ \phi'' )\\
     &= ([r]_{p^{e-1}},\phi)\smallstar \left( ([r']_{p^{e-1}},\phi')\smallstar ([r'']_{p^{e-1}},  \phi'' )\right).
\end{align*}
% \begin{align*}
%      &\mathrel{\phantom{=}} \left(([r]_{p^{e-1}},\phi) \smallstar ([r']_{p^{e-1}},\phi')\right) \smallstar ([r'']_{p^{e-1}},\phi'') = ([r]_{p^{e-1}}\phi_{e-1}([r']_{p^{e-1}}),\phi\circ\phi') \smallstar ([r'']_{p^{e-1}},\phi'') \\
%       &= ([r]\phi_{e-1}([r'])\phi\circ\phi'([r''])]_{p^{e-1}}, \phi \circ \phi' \circ \phi'') = ([r\phi([r'])\phi(\phi'([r'']))]_{p^{e-1}}, \phi \circ \phi' \circ \phi'') \\
%      &= ([r\phi(r'\phi'([r'']))]_{p^{e-1}}, \phi \circ \phi' \circ \phi'') = ([r]_{p^{e-1}},\phi) \smallstar ([r'\phi'([r''])]_{p^{e-1}}, \phi'\circ\phi'') \\
%     &= ([r]_{p^{e-1}},\phi) \smallstar \left(([r']_{p^{e-1}},\phi') \smallstar ([r'']_{p^{e-1}},\phi'')\right).
%\end{align*}

\item[{\sf{Identity element}}] We verify that the identity element is $([1]_{p^{e-1}}, \Id).$
\begin{align*}
    ([r]_{p^{e-1}},\phi) \smallstar ([1]_{p^{e-1}}, \Id) &= ([r]_{p^{e-1}}\Id_{e-1}([1]_{p^{e-1}}),\phi\circ\Id) = ([r]_{p^{e-1}},\phi), \text{ and}\\
    ([1]_{p^{e-1}}, \Id) \smallstar ([r]_{p^{e-1}},\phi)&=([1]_{p^{e-1}}\Id_{e-1}([r]_{p^{e-1}}),\Id\circ\phi) = ([r]_{p^{e-1}},\phi).
\end{align*}

\item[{\sf{Inverse element}}] The inverse element of $([r]_{p^{e-1}},\phi)$ is $(\phi^{-1}_{e-1}([r]_{p^{e-1}})^{-1},\phi^{-1})$.\\
Indeed, using Corollary \ref{theta}, we obtain
\begin{align*}
    (\phi^{-1}_{e-1}([r]_{p^{e-1}})^{-1},\phi^{-1}) \smallstar ([r]_{p^{e-1}},\phi) &= (\phi^{-1}_{e-1}([r]_{p^{e-1}})^{-1}\phi^{-1}_{e-1}([r]_{p^{e-1}}), \phi^{-1}\circ\phi) = ([1]_{p^{e-1}},\Id), \text{ and} \\
    ([r]_{p^{e-1}},\phi) \smallstar (\phi^{-1}_{e-1}([r]_{p^{e-1}})^{-1},\phi^{-1}) &= ([r]_{p^{e-1}} \phi_{e-1}(\phi^{-1}_{e-1}([r]_{p^{e-1}})^{-1}), \phi \circ \phi^{-1}) = ([r]_{p^{e-1}}[r]^{-1}_{p^{e-1}},\Id)\\ &= ([1]_{p^{e-1}},\Id). \qedhere
\end{align*}
\end{description}
\end{proof}

We can now deduce the main theorem of this paper, that $\Aut(\mathbb{Z}/p^e\mathbb{Z}, \cdot)$ is always isomorphic to the semi-direct product described in the previous definition.
\begin{theorem}\label{Theo3.3}
We have that
\[
\begin{array}{cccc}
    \Psi: & (\Aut(\mathbb{Z}/p^e\mathbb{Z}, \cdot),\circ) & \to & (U_{p^{e-1}} \rtimes \mathcal{A}_{p^e}, \smallstar) \\
    & \psi_{r, \phi} &\mapsto & ([r]_{p^{e-1}}, \phi)
\end{array}
\]
is a group isomorphism, where $\psi_{r,\phi}$ is as in Proposition \ref{prop:AutZpeZ}.
\end{theorem}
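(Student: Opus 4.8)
The plan is to prove that $\Psi$ is a well-defined bijection and that it respects the group operations. By Proposition \ref{prop:AutZpeZ}, every automorphism of $(\mathbb{Z}/p^e\mathbb{Z},\cdot)$ has the form $\psi_{r,\phi}$ with $[r]\in U_{p^e}$ and $\phi\in\mathcal{A}_{p^e}$, so $\Psi$ is defined on all of $\Aut(\mathbb{Z}/p^e\mathbb{Z},\cdot)$. The first thing to check is well-definedness: I must confirm that $\psi_{r,\phi}$ depends on $r$ only through its class $[r]_{p^{e-1}}$, and that the assignment $\psi_{r,\phi}\mapsto([r]_{p^{e-1}},\phi)$ is unambiguous. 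This amounts to showing $\psi_{r,\phi}=\psi_{r',\phi'}$ if and only if $[r]_{p^{e-1}}=[r']_{p^{e-1}}$ and $\phi=\phi'$. Equality of the automorphisms forces agreement on the generators $\{[p]a\}\cup G$ from Lemma \ref{lem:ZpeZ_generators}; agreement on $G\subseteq U_{p^e}$ gives $\phi=\phi'$, while agreement on $[p]$ gives $[pr]=[pr']$ in $\mathbb{Z}/p^e\mathbb{Z}$, which by Lemma \ref{Lem3.1} is equivalent to $[r]_{p^{e-1}}=[r']_{p^{e-1}}$ (since $[pr]=[p]^1[r]=[p]^1[r']=[pr']$ iff $[r]_{p^{e-1}}=[r']_{p^{e-1}}$). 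This simultaneously establishes injectivity, and surjectivity is immediate from Proposition \ref{prop:AutZpeZ} since every pair $([r]_{p^{e-1}},\phi)$ arises from some $\psi_{r,\phi}$.

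The core of the argument is the homomorphism property. I would compute the composite $\psi_{r,\phi}\circ\psi_{r',\phi'}$ directly on a general element $[p]^u b$ with $u\in\llbracket 0,e\rrbracket$ and $b\in U_{p^e}$, and show it equals $\psi_{r'',\phi''}$ for the appropriate $r''$ and $\phi''$, then compare with the $\smallstar$-product. Applying the definition from Proposition \ref{prop:AutZpeZ} twice gives
\[
(\psi_{r,\phi}\circ\psi_{r',\phi'})([p]^u b) = \psi_{r,\phi}\bigl([pr']^u\phi'(b)\bigr) = \psi_{r,\phi}\bigl([p]^u[r']^u\phi'(b)\bigr).
\]
Here $[r']^u\phi'(b)\in U_{p^e}$ since it is a product of units, so this equals $[pr]^u\,\phi\bigl([r']^u\phi'(b)\bigr) = [pr]^u[\phi(r')]^u\,(\phi\circ\phi')(b)$, using that $\phi$ is a homomorphism. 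The key point is to recognise this as $\psi_{r'',\phi''}([p]^u b)$ where $\phi''=\phi\circ\phi'$ and $[pr'']=[pr][\phi(r')]$, i.e.\ $[r'']=[r]\phi(r')$ in $U_{p^e}$. Reducing mod $p^{e-1}$ and using that $\phi_{e-1}$ is the induced map (Definition-Lemma \ref{Deflem2.5}) gives $[r'']_{p^{e-1}}=[r]_{p^{e-1}}\,\phi_{e-1}([r']_{p^{e-1}})$, which is exactly the first coordinate of $([r]_{p^{e-1}},\phi)\smallstar([r']_{p^{e-1}},\phi')$.

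I expect the main obstacle to be the careful bookkeeping in the composition step, specifically verifying that $\phi(r')$ reduces correctly modulo $p^{e-1}$ to $\phi_{e-1}([r']_{p^{e-1}})$, so that the first coordinate genuinely matches the $\smallstar$-formula rather than some class modulo $p^e$. This is precisely where the definition of $\phi_{e-1}$ in Definition-Lemma \ref{Deflem2.5} and the compatibility encoded in Corollary \ref{theta} do the work: reducing the unit $[\phi(r')]$ from $U_{p^e}$ to $U_{p^{e-1}}$ yields $\phi_{e-1}([r']_{p^{e-1}})$ by construction, so no ambiguity arises. Once the two coordinates are matched, combined with the well-definedness and bijectivity established above, $\Psi$ is confirmed to be a group isomorphism. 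A minor subtlety worth flagging explicitly is that the exponent $u$ distributes correctly and that the case $u=e$ (where $[p]^e b=[0]$) is handled consistently, which follows since both sides send $[0]$ to $[0]$ by Lemma \ref{Lem1.2}(2).
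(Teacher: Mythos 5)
Your proposal is correct and follows essentially the same route as the paper's proof: well-definedness and injectivity via agreement on the generators $[p]$ and $U_{p^e}$ (using Lemma \ref{Lem3.1} to translate $[pr]=[pr']$ into $[r]_{p^{e-1}}=[r']_{p^{e-1}}$), surjectivity from Proposition \ref{prop:AutZpeZ}, and the homomorphism property by computing $\psi_{r,\phi}\circ\psi_{r',\phi'}$ on a general element $[p]^u b$ and identifying it with $\psi_{r\phi(r'),\,\phi\circ\phi'}$. Your added remarks on the reduction to $\phi_{e-1}$ and the $u=e$ case are harmless refinements of the same argument.
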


\begin{proof}
We prove that \(\Psi\) is well-defined and injective.
Let $r$ and $r'$ be integers coprime to $p$ and $\phi,\phi' \in \mathcal{A}_{p^e}$
% Let \(\psi_{r, \phi}\) and \(\psi_{r', \phi'}\) be elements of \(\Aut(\mathbb{Z}/p^e \mathbb{Z}, \cdot)\)
such that \(\psi_{r, \phi} = \psi_{r', \phi'}\).
This is equivalent to having, for any \([s] \in U_{p^e}\), that \( \phi([s]) = \psi_{r, \phi}([s]) = \psi_{r', \phi'}([s]) = \phi'([s]) \), and \([pr]=\psi_{r, \phi}([p])  = \psi_{r', \phi'}([p]) = [pr'] \).
This in turn is equivalent to \(([r]_{p^{e-1}}, \phi) = ([r']_{p^{e-1}}, \phi')\).
Note that $\Psi$ is surjective by Proposition \ref{prop:AutZpeZ}.

To show that \(\Psi\) is a group homomorphism, consider \([p]^u t \in \mathbb{Z}/p^e \mathbb{Z}\) as above.
We compute:
\begin{align*}
    \mathrm{\phantom{=}} \psi_{r, \phi} \circ \psi_{r', \phi'}([p]^u t) &= \psi_{r, \phi}([pr']^u \phi'(t)) = \psi_{r, \phi} ([p]^u [r']^u \phi'(t)) \\
    &= [pr]^u \phi([r']^u \phi'(t)) \\
    &= ([p][r]\phi([r']))^u \phi(\phi'(t)) = \psi_{rs', \phi \circ \phi'}([p]^u t).
\end{align*}
where \( s'\) is an integer such that $[s']= \phi([r'])$.
Thus \(\Psi\) is indeed a group homomorphism.
\end{proof}

\begin{corollary} ~ \label{strape}
\begin{enumerate}
\item For $p$ an odd prime and $e=1$, we have:
\[
(\Aut(\mathbb{Z}/p\mathbb{Z}), \circ) \simeq (U_{p-1}, \cdot).
\]
In particular, $|\Aut(\mathbb{Z}/p\mathbb{Z})| = \varphi(p-1)$.

\item For \( p \) an odd prime and $e \geq 2$, we have:
\[
(\Aut(\mathbb{Z}/p^e \mathbb{Z}, \cdot), \circ) \simeq (\mathbb{Z}/p^{e-2}(p-1)\mathbb{Z} \rtimes U_{p^{e-1}(p-1)}, \ast),
\]
where the binary operation \(\ast\) is defined for any \( u, u', v, v' \in \mathbb{Z} \) with \( \gcd(v, p) = \gcd(v', p) = 1 \) by:
\[
([u]_{p^{e-2}(p-1)}, [v]_{p^{e-1}(p-1)}) \ast ([u']_{p^{e-2}(p-1)}, [v']_{p^{e-1}(p-1)}) = ([u + u'v]_{p^{e-2}(p-1)}, [vv']_{p^{e-1}(p-1)}).
\]
Using the notation of Proposition \ref{Prop2.3}, and Theorem \ref{Theo3.3}, this isomorphism can be explicitly constructed by choosing a generator \( a \) of \( U_{p^{e-1}} \) and mapping each automorphism \( \psi_{r, \chi_{p^e, t}} \) to \( ([s]_{p^{e-2}(p-1)}, [t]_{p^{e-1}(p-1)}) \), where \( s \) is an integer satisfying \(  [r]_{p^{e-1}} =a^s \).\\
In particular, \[|\Aut(\mathbb{Z}/p^e \mathbb{Z}, \cdot)|= (p^{e-2}(p-1))^2 \varphi (p-1)\]
where \(\varphi\) denotes Euler's totient function.

\item We have $$(\Aut(\mathbb{Z}/8 \mathbb{Z}, \cdot), \circ)\simeq(\mathbb{Z}/2\mathbb{Z} \times \mathbb{Z}/2\mathbb{Z}, +).$$

\item
When \( p = 2 \) and \( e \geq 4 \), we have the isomorphism:
\[
(\Aut(\mathbb{Z}/2^e \mathbb{Z}, \cdot), \circ) \simeq \big( \mathbb{Z}/2\mathbb{Z} \times \mathbb{Z}/2\mathbb{Z} \times (\mathbb{Z}/2^{e-3}\mathbb{Z} \rtimes U_{2^{e-2}}), \# \big),
\]
where the binary operation \( \mathbin{\#} \) is defined for any \( u, s, v, t, u', s', v', t' \in \mathbb{Z} \) with \( \gcd(t, 2) = \gcd(t', 2) = 1 \) by:
\begin{align*}
    &\mathrel{\phantom{=}} ([u]_2, [s]_2, ([v]_{2^{e-3}}, [t]_{2^{e-2}})) \mathbin{\#} ([u']_2, [s']_2, ([v']_{2^{e-3}}, [t']_{2^{e-2}})) \\
    &= ([u + u']_2, [s + s']_2, ([v + tv']_{2^{e-3}}, [tt']_{2^{e-2}})).
\end{align*}
Using the notation from Theorem \ref{Theo2.6}, Proposition \ref{Prop2.7}, and Theorem \ref{Theo3.3}, we can explicitly construct this isomorphism by mapping each automorphism \( \psi_{r, \chi_{p^e, (s, 0, t)}} \) to
\[
([u]_2, [s]_2, ([v]_{2^{e-3}}, [t]_{2^{e-2}})),
\]
where \( u, v \) are integers satisfying
\[
[r]_{2^{e-1}} = [-1]_{2^{e-1}}^u [5]_{2^{e-1}}^v.
\]
In particular, for \( e = 4 \), we obtain:
\[
(\Aut(\mathbb{Z}/16 \mathbb{Z}, \cdot), \circ) \simeq (\mathbb{Z}/2\mathbb{Z} \times \mathbb{Z}/2\mathbb{Z} \times \mathbb{Z}/2\mathbb{Z} \times \mathbb{Z}/2\mathbb{Z}, +).
\]
In general,
\[
|\Aut(\mathbb{Z}/2^e \mathbb{Z}, \cdot)| = 2^{2(e-2)}.
\]
\end{enumerate}
\end{corollary}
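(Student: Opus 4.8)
The plan is to derive each of the four cases directly from Theorem \ref{Theo3.3}, which furnishes the master isomorphism $(\Aut(\mathbb{Z}/p^e\mathbb{Z},\cdot),\circ)\cong(U_{p^{e-1}}\rtimes\mathcal{A}_{p^e},\smallstar)$, and then substitute the explicit descriptions of the two factors together with the induced action $\phi\mapsto\phi_{e-1}$. For the normal factor I would invoke the classical structure of $U_{p^{e-1}}$ (cyclic of order $p^{e-2}(p-1)$ for odd $p$, and $\langle[-1]_{2^{e-1}}\rangle\times\langle[5]_{2^{e-1}}\rangle\cong\mathbb{Z}/2\mathbb{Z}\times\mathbb{Z}/2^{e-3}\mathbb{Z}$ for $p=2$, $e\geq 4$); for the acting factor I would use Proposition \ref{Prop2.7} together with the structure of $\mathcal{A}_{2^e}$ established just above. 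In every case the work reduces to pinning down the semidirect twist encoded by $\smallstar$ and simplifying.

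For $p$ odd and $e=1$ the normal factor $U_{p^0}$ is trivial, so the product collapses to $\mathcal{A}_p=\Aut(U_p,\cdot)\cong U_{p-1}$, giving (1) and $|U_{p-1}|=\varphi(p-1)$. For $p$ odd and $e\geq 2$ I would fix a generator $a$ of the cyclic group $U_{p^{e-1}}$, transport it to $\mathbb{Z}/p^{e-2}(p-1)\mathbb{Z}$ via $a^s\mapsto[s]$, and identify $\mathcal{A}_{p^e}=\Aut(U_{p^e},\cdot)$ with $U_{p^{e-1}(p-1)}$ via $\chi_{p^e,t}\mapsto[t]$. Under these identifications the restriction of $\chi_{p^e,t}$ acts on $U_{p^{e-1}}$ as the power map $a^s\mapsto a^{st}$, i.e. as multiplication by $t$ on the additive group $\mathbb{Z}/p^{e-2}(p-1)\mathbb{Z}$; feeding this into the definition of $\smallstar$ reproduces exactly the operation $\ast$. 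The order count follows from $\varphi(p^{e-1}(p-1))=p^{e-2}(p-1)\varphi(p-1)$, since $p^{e-1}$ and $p-1$ are coprime.

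The case $p^e=8$ is where a trivial twist first appears: here $U_4\cong\mathbb{Z}/2\mathbb{Z}$ and $\mathcal{A}_8\cong\mathbb{Z}/2\mathbb{Z}$, and the point is that for $\phi\in\mathcal{A}_8$ the induced map $\phi_2$ fixes $[-1]_4$, because $\phi([-1])=[-1][5]^t$ while $[5]_4=[1]_4$. Hence the twist is trivial and the semidirect product is the direct product $\mathbb{Z}/2\mathbb{Z}\times\mathbb{Z}/2\mathbb{Z}$, proving (3).

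The genuinely delicate case, and the one I expect to be the main obstacle, is $p=2$, $e\geq 4$. Writing $U_{2^{e-1}}=\langle[-1]_{2^{e-1}}\rangle\times\langle[5]_{2^{e-1}}\rangle$ and $\mathcal{A}_{2^e}\cong U_{2^{e-2}}\times\mathbb{Z}/2\mathbb{Z}$ (the factors recorded by $t_3$ and by $[t_1/2^{e-3}]_2$, via Proposition \ref{Prop2.7} and Theorem \ref{Theo2.6}), I would compute the action of $\chi_{2^e,(t_1,0,t_3)}$ explicitly: it sends $[5]_{2^{e-1}}\mapsto[5]_{2^{e-1}}^{t_3}$, and the crucial observation is that $\phi_{e-1}([-1]_{2^{e-1}})=[-1]_{2^{e-1}}[5]_{2^{e-1}}^{t_1}=[-1]_{2^{e-1}}$, since $[5]_{2^{e-1}}$ has order $2^{e-3}$ and $t_1\in\{0,2^{e-3}\}$ is a multiple of that order. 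Consequently the $\langle[-1]\rangle$ summand of $U_{2^{e-1}}$ and the $[t_1/2^{e-3}]_2$ summand of $\mathcal{A}_{2^e}$ are both acted on trivially and split off as two direct $\mathbb{Z}/2\mathbb{Z}$ factors, while the residual action of $U_{2^{e-2}}$ on $\langle[5]\rangle\cong\mathbb{Z}/2^{e-3}\mathbb{Z}$ is multiplication by $t_3$; this isolates the factor $\mathbb{Z}/2^{e-3}\mathbb{Z}\rtimes U_{2^{e-2}}$ and, upon substitution into $\smallstar$, yields the operation $\#$. The order is $|U_{2^{e-1}}|\cdot|\mathcal{A}_{2^e}|=2^{e-2}\cdot 2^{e-2}=2^{2(e-2)}$, and for $e=4$ the action of $U_4$ on $\mathbb{Z}/2\mathbb{Z}$ is trivial, since odd multipliers act as the identity modulo $2$, collapsing the last factor to $\mathbb{Z}/2\mathbb{Z}\times\mathbb{Z}/2\mathbb{Z}$ and giving the elementary abelian group of order $16$. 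Throughout, the only real care needed is to keep the direction of the semidirect twist consistent with the convention fixed in the definition of $\smallstar$.
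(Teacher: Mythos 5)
Your proposal is correct and follows essentially the same route as the paper: apply Theorem \ref{Theo3.3} and substitute the explicit descriptions of $U_{p^{e-1}}$ and $\mathcal{A}_{p^e}$, with the key observations being that $\phi_{e-1}$ acts as the power map $x\mapsto x^t$ in the odd case, and that in the even case the $t_1$-component dies upon reduction (since $[5]_{2^{e-1}}^{t_1}=[1]_{2^{e-1}}$, which is exactly the content of Remark \ref{Ape} that the paper invokes), so the two $\mathbb{Z}/2\mathbb{Z}$ factors split off and only the multiplication-by-$t_3$ twist on $\langle[5]_{2^{e-1}}\rangle\cong\mathbb{Z}/2^{e-3}\mathbb{Z}$ survives. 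The order counts also match the paper's.
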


\begin{proof} ~
\begin{enumerate}
\item This follows from Proposition \ref{Prop2.3} and Theorem \ref{Theo3.3}.
\item This follows from Proposition \ref{Prop2.3} and Theorem \ref{Theo3.3}. %We note that since \( p^{e-2}(p-1) \mid p^{e-1}(p-1) \), which ensures that the binary operation \(\ast\) is well-defined.
\item By Theorem \ref{Theo3.3}, $(\Aut(\mathbb{Z}/8 \mathbb{Z}, \cdot), \circ) \simeq (U_4 \rtimes \mathcal{A}_{8}, \smallstar)$.
But $U_4 = \{[1],[3]\} \simeq \mathbb{Z}/2\mathbb{Z} \simeq \mathcal{A}_8$, where by Proposition \ref{Prop2.7} $\phi \in \mathcal{A}_8$ is defined by having fixed points $[1]$ and $[5]$, and either swapping $[3]$ and $[7]$ or leaving them fixed.
Thus $\phi_2 = \Id_{U_4}$, and so the group action $\smallstar$ in the semidirect product makes it into the direct product $\mathbb{Z}/2\mathbb{Z} \times \mathbb{Z}/2\mathbb{Z}$.
\item By Theorem \ref{Theo2.6}, Proposition \ref{Prop2.7}, and Theorem \ref{Theo3.3}, for any \( [r]_{2^{e-1}}, [r']_{2^{e-1}} \in U_{2^{e-1}} \), we can express them as:
\[
[r]_{2^{e-1}} = [-1]^{u}_{2^{e-1}} [5]^{v}_{2^{e-1}}, \quad [r']_{2^{e-1}} = [-1]^{u'}_{2^{e-1}} [5]^{v'}_{2^{e-1}},
\]
for some \( u, u' \in \{0,1\} \) and \( v, v' \in \llbracket 0, 2^{e-3} - 1\rrbracket \).  \\
Now, consider \( \phi = \chi_{2^e, (s,0, t)} \) and \( \phi' = \chi_{2^e, (s',0, t')} \) in \( \mathcal{A}_{2^e} \).
By Remark \ref{Ape}, we have
\[
\phi_{e-1} = \chi_{2^{e-1}, (0,0, t)}.
\]
Thus, we compute:
\begin{align*}
    [r]_{2^{e-1}} \phi_{e-1}([r']_{2^{e-1}})
    &= [-1]_{2^{e-1}}^{u} [5]^{v}_{2^{e-1}} \phi_{e-1}([-1]_{2^{e-1}}^{u'} [5]^{v'}_{2^{e-1}}) \\
    &= [-1]_{2^{e-1}}^{u} [5]^{v}_{2^{e-1}} \chi_{2^{e-1}, (0,0, t)}([-1]_{2^{e-1}}^{u'} [5]^{v'}_{2^{e-1}}) \\
    &= [-1]_{2^{e-1}}^u [5]^{v}_{2^{e-1}} \cdot ([-1]_{2^{e-1}}^{u'}) \cdot ([5]^{t}_{2^{e-1}})^{v'} \\
    &= [-1]_{2^{e-1}}^{u+u'} [5]^{v + t v'}_{2^{e-1}}.
\end{align*}
Furthermore, the composition in \( \mathcal{A}_{2^e} \) satisfies:
\[
\chi_{2^e, (s,0, t)} \circ \chi_{2^e, (s',0, t')} = \chi_{2^e, (s + s',0, tt')}. \qedhere
\]
\end{enumerate}
\end{proof}

\begin{remark} ~
\begin{itemize}
    \item We note that for any automorphism of $\mathbb{Z}/8\mathbb{Z}$, we have $\phi([4]) = [4]$.
    \item $(\Aut(\mathbb{Z}/p^e \mathbb{Z}, \cdot), \circ)$ is a non commutative group when $p$ is odd or, $p=2$ and $e\geq 5$.
    \item We denote by \(\Aut_{U_{p^e}} (\mathbb{Z}/p^e \mathbb{Z}, \cdot)\) and \(\Aut_{[p]} (\mathbb{Z}/p^e \mathbb{Z}, \cdot)\) the automorphism groups of \(\mathbb{Z}/p^e \mathbb{Z}\) that fix \(U_{p^e}\) and \([p]\) pointwise, respectively.
    
    The map \(\Psi\) induces the following isomorphisms:
    \begin{align*}
        \Aut_{U_{p^e}} (\mathbb{Z}/p^e \mathbb{Z}, \cdot) &= \left\{ \psi_{r, \Id} \mid r \in U_{p^{e-1}} \right\} \cong U_{p^{e-1}} \\
         \shortintertext{and}
        \Aut_{[p]} (\mathbb{Z}/p^e \mathbb{Z}, \cdot) &= \left\{ \psi_{1, \phi} \mid \phi \in \Aut(U_{p^e}, \cdot) \right\} \cong \mathcal{A}_{p^e}.
    \end{align*}
    Thus we have by Theorem \ref{Theo3.3} that \(\Aut(\mathbb{Z}/p^e \mathbb{Z}, \cdot)\) is the internal semidirect product of \(\Aut_{U_{p^e}} (\mathbb{Z}/p^e \mathbb{Z}, \cdot)\) and \(\Aut_{[p]} (\mathbb{Z}/p^e \mathbb{Z}, \cdot)\).
    In particular, we have:
    \begin{align*}
        &\mathrel{\phantom{=}} \Aut_{U_{p^e}}(\mathbb{Z}/p^e \mathbb{Z}, \cdot) \circ \Aut_{ [p] }(\mathbb{Z}/p^e \mathbb{Z}, \cdot) \\
        &= \left\{\psi_{U_{p^e}} \circ \psi_{[p]} \middle| \psi_{U_{p^e}} \in \Aut_{U_{p^e}}(\mathbb{Z}/p^e \mathbb{Z}, \cdot), \psi_{[p]} \in \Aut_{ [p] }(\mathbb{Z}/p^e \mathbb{Z}, \cdot)\right\} \\
        &=\Aut(\mathbb{Z}/p^e \mathbb{Z}, \cdot).
    \end{align*}
\end{itemize}
\end{remark}

%==== Bibliography acro's & Index ===================================
%\printbibliography

\end{document}